\documentclass[11pt]{amsart}

\author[C.~Sanna]{Carlo Sanna$^\dagger$}
\thanks{$\dagger\,$C.~Sanna is a member of GNSAGA of INdAM and of CrypTO, the group of Cryptography and Number~Theory of Politecnico di Torino}
\address{\parbox{\linewidth}{
Department of Mathematical Sciences, Politecnico di Torino\\
Corso Duca degli Abruzzi 24, 10129 Torino, Italy\\[-8pt]}}
\email{carlo.sanna@polito.it}

\keywords{bounds, inequalities, lucky numbers, prime numbers}
\subjclass[2010]{Primary: 11B99 Secondary: 11A99, 11N35}

\title{Explicit inequalities for the $n$th lucky number}

\usepackage{amsmath}
\usepackage{amssymb}
\usepackage{amsthm}
\usepackage[left=1.15in, right=1.15in, top=.72in, bottom=.72in]{geometry}
\usepackage[colorlinks=true]{hyperref}
\usepackage{cleveref}
\usepackage{comment}
\usepackage{enumitem}
\setlist[enumerate]{label=(\roman*),labelindent=1em,itemsep=0.5em,topsep=0.5em}

\newtheorem{theorem}{Theorem}[section]

\newtheorem{lemma}[theorem]{Lemma}
\theoremstyle{remark}
\newtheorem{remark}{Remark}[section]

\newcommand{\neper}{\mathrm{e}}
\DeclareMathOperator{\llog}{llog}

\allowdisplaybreaks
\newcommand{\ConMaxLuckyIndex}{10^7}
\newcommand{\ConFlbNzer}{549}
\newcommand{\ConFlbNone}{66593}
\newcommand{\ConRoneNone}{66621}
\newcommand{\ConRoneNtwo}{739945}
\newcommand{\ConRoneNthr}{9999862}
\newcommand{\ConRoneCone}{0.991755851885?}
\newcommand{\ConRoneCtwo}{1.268476992459?}
\newcommand{\ConRoneCthr}{0.5752440606595?}
\newcommand{\ConRoneCfou}{3.6024414?}
\newcommand{\ConRoneCfiv}{0.008278319041?}
\newcommand{\ConRoneCsix}{1.95351805?}
\newcommand{\ConRoneCsev}{1.016590998114?}
\newcommand{\ConRoneCeig}{7.246544?}
\newcommand{\ConRtwoNone}{66621}
\newcommand{\ConRtwoNtwo}{739945}
\newcommand{\ConRtwoNthr}{9999862}

\newcommand{\ConRtwoCtwo}{1.27675531149927?}
\newcommand{\ConRtwoCthr}{0.5752440606595?}
\newcommand{\ConRtwoCfou}{3.6146231?}
\newcommand{\ConRtwoCfiv}{0}
\newcommand{\ConRtwoCsix}{1.94111497?}
\newcommand{\ConRtwoCsev}{1}
\newcommand{\ConRtwoCeig}{7.206931?}
\newcommand{\ConRthrNone}{739954}
\newcommand{\ConRthrNtwo}{9999993}
\newcommand{\ConRthrCtwo}{1.23864439772699?}
\newcommand{\ConRthrCthr}{0.5661305487495?}
\newcommand{\ConRthrCfou}{3.015515?}
\newcommand{\ConLowCeig}{7.207}
\newcommand{\ConLowDE}{4.927}
\newcommand{\ConUppCtwo}{1.239}
\newcommand{\ConUppCfou}{3.016}

\begin{document}

\begin{abstract}
    Gardiner, Lazarus, Metropolis, and Ulam introduced a variation of the sieve of Eratosthenes that---instead of producing the sequence of prime numbers---produces the sequence of \emph{lucky numbers}.
    The distribution of lucky numbers has a striking similarity to that of prime numbers.
    In particular, Hawkins and Briggs proved that if $\ell_n$ denotes the $n$th lucky number then $\ell_n \sim n \log n$, which is analogous to the prime number theorem.

    This work provides explicit upper and lower bounds on $\ell_n$.
\end{abstract}

\maketitle

\section{Introduction}

In 1956, Gardiner, Lazarus, Metropolis, and Ulam~\cite{MR75217} introduced the following variation of the sieve of Eratosthenes, which---instead of producing the sequence of prime numbers---produces a sequence of integers that they called \emph{lucky numbers}.
Start with the sequence of positive integers
\begin{equation*}
    1,\; 2,\; 3,\; 4,\; 5,\; 6,\; 7,\; 8,\; 9,\; 10,\; \dots
\end{equation*}
Remove every \emph{second} term.
This leaves the sequence of odd positive integers
\begin{equation*}
    1,\; 3,\; 5,\; 7,\; 9,\; 11,\; 13,\; 15,\; 17,\; 19,\;  \dots
\end{equation*}
Except for $1$, the first remaining integer is $3$.
Remove every \emph{third} term, leaving the sequence
\begin{equation*}
    1,\; 3,\; 7,\; 9,\; 13,\; 15,\; 19,\; 21,\; 25,\; 27,\; \dots
\end{equation*}
The first new remaining integer is $7$.
Remove every \emph{seventh} term... and so on.
Continuing this process indefinitely leaves the sequence of lucky numbers
\begin{equation*}
    1,\; 3,\; 7,\; 9,\; 13,\; 15,\; 21,\; 25,\; 31,\; 33,\; \dots
\end{equation*}
See the OEIS for more terms~\cite[A000959]{OEIS}.
By numerical experiments, Gardiner et al.\ observed a striking similarity between the distribution of lucky numbers and that of prime numbers.
Let $\ell_n$ and $p_n$ denote the $n$th lucky number and the $n$th prime number, respectively.
In 1957, Hawkins and Briggs~\cite{MR103866,MR103867} proved that $\ell_n \sim n \log n$, which is analogous to the prime number theorem $p_n \sim n \log n$.
They also claimed that Chowla applied their method recursively and proved that
\begin{equation*}
    \ell_n
    = n \big(\! \log n + \big(\tfrac1{2} + o(1)\big) (\log \log n)^2 \big) .
\end{equation*}
Since the corresponding result for prime numbers is
\begin{equation*}
    p_n
    = n \big(\! \log n + \big(1 + o(1)\big) \log \log n \big) ,
\end{equation*}
it follows that $\ell_n > p_n$ for all sufficiently large integers $n$.
In 1963, Briggs~\cite{MR148638} provided asymptotic formulas for a large class of sequences, which contains the sequence of lucky numbers.
In~particular, he proved that
\begin{equation}\label{equ:l_n-third-term}
    \ell_n
    = n \big(\! \log n + \tfrac1{2} (\log \log n)^2 - (\gamma + o(1)) (\log \log n)\big) ,
\end{equation}
where $\gamma = 0.577...$ is the Euler--Mascheroni constant.
It is worth to remark that in 1958 Erd\H{o}s and Jabotinsky~\cite{MR103865} independently proved a result that is substantially the same as~\eqref{equ:l_n-third-term}.

The aim of this work is to prove explicit inequalities for the $n$th lucky number.
The first result is a lower bound on $\ell_n$.

\begin{theorem}\label{thm:lower}
    The inequality
    \begin{equation}\label{thm:lower:equ:1}
        \ell_n > n \log n
    \end{equation}
    holds for all integers $n \geq 1$; and the inequality
    \begin{equation}\label{thm:lower:equ:2}
        \ell_n
        > n \big(\!\log n + \tfrac1{2}(\log\log n)^2 - \log\log n - \ConLowCeig\big)
    \end{equation}
    holds for all integers $n > \neper^{\neper^{\ConLowDE}}$.
\end{theorem}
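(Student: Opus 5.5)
The plan is to work with the exact backward recursion that the sieve induces on positions, and to combine a strong induction on $n$ with a direct computation of all $\ell_n$ for $n \leq \ConMaxLuckyIndex$.

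\textbf{The recursion.} Write $\ell_1 = 1$ and $\ell_2 = 2$ for the first sieving step, so that the $j$th step removes every $\ell_{j+1}$th term; the same recursion applies with the indices shifted.
\begin{itemize}
\item If an integer occupies position $m$ after a step that removes every $q$th term, it occupied position $m + \lfloor (m-1)/(q-1) \rfloor$ before that step.
\item A step with modulus $q > m$ does not move position $m$.
\item Hence $\ell_n$ is obtained by starting from $m_K = n$ and applying $m \mapsto m + \lfloor (m-1)/(\ell_{j}-1) \rfloor$ for $j = K, K-1, \dots$ down to the first sieve (by $2$). Here $K$ is the largest index with $\ell_K \leq n$ (more precisely, with $\ell_K$ at most the current position).
\end{itemize}
Using $\lfloor x \rfloor \geq x - 1 + 1/(q-1)$, or simply $\lfloor x\rfloor > x-1$, each step satisfies $m_{j-1} - 1 \geq (m_j - 1)\,\ell_j/(\ell_j - 1) - 1$. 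Iterating this gives
\begin{equation*}
  \ell_n - 1 \;\geq\; (n-1)\prod_{j \leq K}\frac{\ell_j}{\ell_j - 1} \;-\; \sum_{k \leq K}\prod_{j < k}\frac{\ell_j}{\ell_j-1}.
\end{equation*}
The main term is $n\exp\bigl(\sum_{j\le K} 1/\ell_j + O(\sum 1/\ell_j^2)\bigr)$. Each product in the error sum is at most about $\ell_k/k$, so the error is at most about $\sum_{k \leq K} \log k \approx K \log K$.

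\textbf{Bounding the sum over $j$.} Lower-bounding $\ell_n$ therefore requires a lower bound for $\sum_{j \le K} 1/\ell_j$, and this needs \emph{upper} bounds on $\ell_j$ for $j < n$. I would therefore run a simultaneous strong induction that carries a crude upper bound, say $\ell_j \leq j(\log j + (\log\log j)^2)$ for $j$ beyond the computed range. Its inductive step uses the same recursion with $\lfloor x\rfloor \leq x$ and the lower bound \eqref{thm:lower:equ:1} for smaller indices. For $j \leq \ConMaxLuckyIndex$ the values $\ell_j$ and the partial sums $\sum 1/\ell_j$ are known exactly by computation, and this also settles \eqref{thm:lower:equ:1} for $n \leq \ConMaxLuckyIndex$. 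For larger $j$ I would compare the sum with $\int dt/(t(\log t + \tfrac12(\log\log t)^2))$. I would also need an estimate of $K$: from $\ell_K \leq n$ and \eqref{thm:lower:equ:1}, $K \log K < n$, so $K \approx n/\log n$. The upper bound on $\ell_j$ then gives $K$ at least about $n/(\log n + \tfrac12(\log\log n)^2)$.

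\textbf{Assembling \eqref{thm:lower:equ:2}.} Exponentiating, $\sum_{j \le K} 1/\ell_j$ comes out as $\log\log K - \tfrac{1}{2}\cdot$(lower-order terms) $+ c$, so the product is about $\log n + \tfrac12(\log\log n)^2 - \log\log n - C$. The error sum $K\log K \leq n$ only perturbs the constant. Collecting all explicit constants (the computed partial sum, the tail-integral remainders, the $\sum 1/\ell_j^2$ term and the linear error) yields $\ConLowCeig$, and the threshold $\log\log n > \ConLowDE$ is where the inequalities used in the comparison become valid. For \eqref{thm:lower:equ:1} the same argument applies with much cruder estimates, since only the leading term $n\log n$ is needed.

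\textbf{Main obstacle.} The hardest step will be making the tail comparison $\sum_{\ell_j \le n} 1/\ell_j$ versus the integral fully explicit, with the circular dependence on upper bounds for $\ell_j$ handled cleanly. I would try first to keep the inductive upper bound weak enough to propagate easily, while still losing only a bounded amount in the constant.
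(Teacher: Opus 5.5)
Your backward recursion is correct. So is the inequality $\ell_n-1\ge(n-1)\rho_{K+1}-\sum_{k\le K}\rho_k$, where in the paper's notation $\rho_{K+1}=\prod_{j\le K}(1-1/\ell_j)^{-1}$, and the error sum is harmless because it is at most $K\rho_{K+1}$.

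\textbf{The gap.} The problem is the next step. You lower-bound $\log\rho_{K+1}=\log\rho_{J_0+1}+\sum_{J_0<j\le K}-\log(1-1/\ell_j)$ by inserting an explicit inductive upper bound $\ell_j\le j\,u(j)$ for every $J_0<j\le K$. This cannot work for any $u$ with slack:
\begin{itemize}
\item Since $\rho_{K+1}=\log K+O((\llog K)^2)$, the true sum is $\llog K+o(1)$.
\item Your substitution loses at least $\delta(K):=\sum_{J_0<j\le K}\bigl(1/\ell_j-1/(j\,u(j))\bigr)$. This is nondecreasing in $K$, so from some point on it is bounded below by a fixed $\delta>0$.
\item Hence your lower bound is at most $\neper^{-\delta}\log K\,(1+o(1))$, which is below $\log n$ for large $n$. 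It yields neither \eqref{thm:lower:equ:1} for large $n$ nor \eqref{thm:lower:equ:2}.
\end{itemize}
The slack is paid in the exponent, as a constant factor in front of $\log n$, not as a bounded additive constant as you hope. For $u(j)=\log j+(\llog j)^2$ and $J_0=10^7$, $\delta$ is roughly $\int_{\log J_0}^{\infty}\tfrac12(\log s)^2s^{-2}\,\mathrm{d}s\approx0.47$.

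The same instability breaks your inductive upper bound. From $\ell_i>i\log i$ alone, the product only gives $\rho_{K'+1}\le\rho_{J_0+1}\prod_{J_0<i\le K'}(1-1/(i\log i))^{-1}$. That bound is at least $(\rho_{J_0+1}/\log(J_0+1))\log(K'+1)$, and the ratio $\rho_{J_0+1}/\log(J_0+1)$ exceeds $1$ (e.g.\ $\rho_m\ge1+\log m$, since $\varrho_m\ge\varrho_2=0$). As $\log K'\approx\log j-\llog j$, this exceeds $\log j+(\llog j)^2$ for large $j$, so the induction does not close.

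\textbf{The missing idea.} The paper relies on a self-consistency you do not use. By Lemma~\ref{lem:l_mn-formula}, $\ell_k=k\rho_k(1-\tau_k)$, hence $\rho_{k+1}-\rho_k=\rho_{k+1}/\ell_k=1/\bigl(k(1-1/\ell_k)(1-\tau_k)\bigr)$ (Lemma~\ref{lem:varrho_n-basics}). So $\rho$ grows \emph{additively} by about $(1+\tau_k)/k$. The unknown $\ell_k$ has been traded for $\rho_k$ itself, and errors add up instead of compounding.

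Your recursion gives the same thing if you use it self-referentially. Let $K_k$ be the last index with $\ell_{K_k}\le k$. With $\lfloor x\rfloor\le x$ the recursion yields $\ell_k\le k\rho_{K_k+1}=k\rho_k\prod_{K_k<i<k}(1-1/\ell_i)$, whence $\rho_{k+1}-\rho_k\ge\frac1k\prod_{K_k<i<k}(1-1/\ell_i)^{-1}$. Explicit bounds on $\ell_i$ are now needed only on the short window $K_k<i<k$, where $\sum1/\ell_i\approx\llog k/\log k$. There a relative error of order $(\llog i)^2/\log i$ in $\ell_i$ costs only a summable $O\bigl((\llog k)^3/(k(\log k)^2)\bigr)$ per step. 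This is exactly the paper's $\tau_n$/$\xi_{x,n}$ analysis with $x=\omega(n)/c_1$, run as the bootstrap: crude lower bound, then upper bound on $\tau_n$, upper bound on $\ell_n$, lower bound on $\tau_n$, lower bound on $\ell_n$, then again with $c_1=1$.

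\textbf{A secondary point.} Inequality \eqref{thm:lower:equ:1} for $10^7<n<10^{100}$ does not follow from ``much cruder estimates''. A bound of the shape \eqref{equ:ell_n-second-lower-bound} with realistic constants is weaker than $n\log n$ until $\llog n$ is about $5$, i.e.\ $n$ of order $10^{60}$. In that range the constants are decisive. The paper needs the separate argument of Lemma~\ref{lem:ell_n-finite-range}: $\ell_n\ge\ell_{m,n}$ with $m\approx\neper^{-c_0t}n$, applied with $n_0=66$ and $n_0=124000$. Also, $\neper^{\neper^{\ConLowDE}}$ is not where comparison inequalities become valid. The paper proves \eqref{thm:lower:equ:2} for all $n\ge9999862$, and that threshold is merely where \eqref{thm:lower:equ:2} becomes stronger than \eqref{thm:lower:equ:1}.
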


Note that \eqref{thm:lower:equ:1} is analogous to Rosser's theorem~\cite{MR1576808}, which states that $p_n > n \log n$ for all integers $n \geq 1$.
Note also that the condition $n > \neper^{\neper^{\ConLowDE}}$ ensures that, for such integers $n$, lower bound \eqref{thm:lower:equ:2} is stronger than \eqref{thm:lower:equ:1}.
Another observation is that inequality \eqref{thm:lower:equ:2} is quite close to the expression in asymptotic formula \eqref{equ:l_n-third-term}, since it has the same two main terms and a third term that (asymptotically) differs only by a factor of $1/\gamma = 1.73...$

The second result is an upper bound on $\ell_n$.

\begin{theorem}\label{thm:upper}
    The inequality
    \begin{equation}\label{thm:upper:equ:1}
        \ell_n
        < n \big(\!\log n + \tfrac1{2}(\log\log n)^2 + 1\big)
    \end{equation}
    holds for all integers $n \in \big[4, 10^7\big]$; and the inequality
    \begin{equation}\label{thm:upper:equ:2}
        \ell_n
        < n \big(\!\log n + \tfrac1{2}(\log\log n)^2 + \ConUppCtwo\log\log n + \ConUppCfou\big)
    \end{equation}
    holds for all integers $n > 10^7$.
\end{theorem}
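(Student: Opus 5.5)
The plan rests on the standard recursion behind the lucky sieve (Hawkins--Briggs). Let $N_k(x)$ denote the number of integers $\le x$ that survive the first $k$ sieving steps. Step $k$ removes every $\ell_{k+1}$th survivor (with $\ell_2 = 3$, $\ell_3 = 7$, and so on, and the step with modulus $2$ treated first). Hence
\begin{equation*}
N_{k}(x) = N_{k-1}(x) - \Big\lfloor \frac{N_{k-1}(x)}{\ell_{k+1}} \Big\rfloor .
\end{equation*}
After the first step we also have $N_1(x)=\lceil x/2\rceil$. The index of $\ell_n$ is fixed once all sieving steps with modulus at most $n$ have been performed, because a later step removes only positions beyond the $n$th. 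So, with $x=\ell_n$, we get $n = N_{K}(\ell_n)$, where $K$ is the last index with $\ell_{K+1}\le n$.

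Dropping the floors gives two-sided estimates, up to an additive error of at most one per step:
\begin{equation*}
x\prod_{j}\Big(1-\frac1{\ell_j}\Big) - K \;\le\; N_K(x) \;\le\; x\prod_{j}\Big(1-\frac1{\ell_j}\Big) + K .
\end{equation*}
Here $j$ runs over the moduli $2,\ell_2,\dots,\ell_{K+1}$. This converts the problem into estimating the product $P(y)=\prod_{\ell_j\le y}(1-1/\ell_j)$ for $y=n$. In turn, $\log P(y)\approx -\sum_{\ell_j\le y} 1/\ell_j$, and that sum is controlled by bounds on $\ell_j$ themselves. The argument is therefore a bootstrap.

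For the upper bound we need a lower bound on $N_K(\ell_n)$, that is, an upper bound on $\sum 1/\ell_j$. That comes from a \emph{lower} bound on $\ell_j$, and Theorem~\ref{thm:lower} is exactly what supplies it. I would split the sum into ranges:
\begin{itemize}
\item for $j\le 10^7$, compute $\sum_{\ell_j\le 10^7}1/\ell_j$ and the product exactly by computer;
\item for the tail, use partial summation with $\ell_j>j\log j$, or the sharper bound \eqref{thm:lower:equ:2} once it applies.
\end{itemize}
The tail gives
\begin{equation*}
\sum_{10^7<j\le J} \frac1{\ell_j}\le \log\log J-\log\log 10^7 + \text{small}.
\end{equation*}
Here $J$ is determined by $\ell_J\le n$, and by \eqref{thm:lower:equ:1} we have $J\le n/\log n$. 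Hence
\begin{equation*}
P(n)\ge \frac{c}{\log n}\exp\big(-\varepsilon(n)\big).
\end{equation*}
Inserting this into $n\ge \ell_n P(n) - K$ and solving yields
\begin{equation*}
\ell_n\le \frac{n+K}{P(n)} .
\end{equation*}
This expression still has the main order $n\log n$, but it is not yet precise enough.

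The reason is that $P(n)\asymp 1/\log n$ only recovers $\ell_n\sim n\log n$; the $\tfrac12(\log\log n)^2$ term comes from the error term. The true main term arises from counting more carefully. Erd\H{o}s--Jabotinsky and Briggs get the second-order term because the number of sieving steps is $K\approx n/\log n$. Those steps remove an amount governed by $\sum_{j\le K}1/\ell_j$ with $\ell_j\approx j\log j$, so
\begin{equation*}
\sum_{j\le K}\frac1{\ell_j}\approx\log\log K\approx \log\log n-\frac{\log\log n}{\log n}.
\end{equation*}
Iterating produces $\int \log\log t/\log t\,dt$-type corrections, which integrate to $\tfrac12(\log\log n)^2$. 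So I would write the relation as an approximate integral equation,
\begin{equation*}
\frac{d}{dn}\ell_n \approx \frac{\ell_n}{n}\Big(1+\sum_{\ell_j\le \ell_n/\ldots}\ldots\Big).
\end{equation*}
Rather than that, it is more robust to use the explicit identity
\begin{equation*}
\ell_{n} = \ell_{n-1}+\text{gap}\qquad\text{or}\qquad n=\ell_n\prod_{j\le K}\Big(1-\frac1{\ell_j}\Big)+\text{error},
\end{equation*}
with the product bounded using the lower bound $\ell_j > j(\log j + \tfrac12(\log\log j)^2 - \log\log j - 7.207)$. Taking logarithms and expanding $\sum_{j\le K} 1/(j\log j(1+\delta_j))$ with $\delta_j=\big(\tfrac12(\log\log j)^2-\dots\big)/\log j$ yields
\begin{equation*}
\log\log K - \tfrac12 \frac{(\log\log K)^2}{\log K}+O\Big(\frac{\log\log K}{\log K}\Big)+C,
\end{equation*}
with $C$ calibrated numerically from the exact data up to $10^7$. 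Exponentiating gives
\begin{equation*}
\ell_n\le n\log n\Big(1+\frac{\tfrac12(\log\log n)^2 + c_1\log\log n + c_2}{\log n}\Big),
\end{equation*}
and numerical bookkeeping yields $c_1=1.239$ and $c_2=3.016$ for $n>10^7$.

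The range $n\in[4,10^7]$ in \eqref{thm:upper:equ:1} is a direct computer check over the first $10^7$ lucky numbers.

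The main obstacle is the error bookkeeping. There are three sources:
\begin{itemize}
\item the floor errors, totalling up to $K\approx n/\log n$, which contribute $O(1)$ to the bracket and must be kept explicit;
\item the conversion between the index $K$ and $n$, where $\log\log K=\log\log n-\log\log n/\log n+\dots$, a correction that feeds directly into the $\log\log n$ coefficient;
\item making every tail constant explicit.
\end{itemize}
I expect the constant $1.239$ to come from combining the coefficient $1$ in the $\log\log n$ term of the lower bound with these index-shift corrections. If a single pass is too lossy, I would first take a preliminary upper bound, feed it back to sharpen the estimate of $K$, and iterate once.
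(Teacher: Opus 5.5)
There is a genuine gap. It sits where you bound $P=\prod_{j\le K}(1-1/\ell_j)$ by exponentiating an upper bound for $\sum_j -\log(1-1/\ell_j)$ that comes from \emph{lower} bounds on $\ell_j$, with a constant $C$ ``calibrated numerically from the exact data up to $10^7$''. That constant is $\lim_{K\to\infty}\big(\sum_{j<K}-\log(1-1/\ell_j)-\llog K\big)=\lim_{K\to\infty}(\log\rho_K-\llog K)$. It depends on every $\ell_j$, and since $\rho_K=\log K+O((\llog K)^2)$ its true value is exactly $0$. You can only bound it from above via Theorem~\ref{thm:lower}, and every such bound overshoots by a fixed positive amount. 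On $10^7<j\le\neper^{\neper^{\ConLowDE}}$ you only have $\ell_j>j\log j$, because \eqref{thm:lower:equ:2} is not available there. Over that range, $\sum_j\big(1/(j\log j)-1/\ell_j\big)$ is a positive constant independent of $n$. For instance, with only \eqref{thm:lower:equ:1} in the tail, your bound is essentially $1/P\le(\rho_{10^7}/\log 10^7)\log K$, and $\rho_{10^7}/\log 10^7>1$ by Lemma~\ref{lem:varrho_n-basics}. Switching to \eqref{thm:lower:equ:2} later cannot undo a loss already incurred, and \eqref{thm:lower:equ:2} itself overshoots by a further positive constant. Exponentiating turns this additive error in $\log(1/P)$ into a factor $\neper^{\Delta}>1$, so the method gives at best $\ell_n\le\neper^{\Delta}n\log n\,(1+o(1))$. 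That is not of the form \eqref{thm:upper:equ:2}, which needs the error in $\log(1/P)$ to be $O(\llog n/\log n)$. Re-estimating $K$ and iterating does not help, since the loss is in the constant, not in the index. Minor points: the expansion gives $+\tfrac12(\llog K)^2/\log K$, not $-$. The $-K$ floor loss is unnecessary, since $N_k\ge(1-1/\ell)N_{k-1}$ holds exactly and gives $\ell_n\le n/P$. And $J\log J<n$ gives $J<\omega(n)$, not $J\le n/\log n$.

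The missing idea is the self-referential identity of Lemma~\ref{lem:l_mn-formula}, $\ell_k=k\rho_k(1-\tau_k)$, which makes the unknown $\rho_k$ cancel in the increment: $\rho_{k+1}-\rho_k=\rho_{k+1}/\ell_k=1/\big(k(1-1/\ell_k)(1-\tau_k)\big)$. So $\rho$ grows \emph{additively} by $\frac1k\big(1+\tau_k+O(\tau_k^2+1/\ell_k)\big)$, and every loss becomes a harmless additive constant in $\rho_n$ (Lemma~\ref{lem:varrho_n-basics}). The only input then needed is an upper bound on $\tau_k$. Lemmas~\ref{lem:tau-xi-estimate} and~\ref{lem:tau_n-upper-bound} give $\tau_k\le\xi_{x,k}+x/k$ with $x=\omega(k)/c_1$. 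Here $\xi_{x,k}$ sums $1/\ell_i$ only over the short range $\omega(k)/c_1<i<k$, where the crude bound $\ell_i>c_1 i\log i$ (with $c_1=1$ by \eqref{thm:lower:equ:1}) costs only lower-order terms. Summing $\tau_k/k$ yields $\tfrac12(\llog n)^2+c_2\llog n+c_4$ (Lemma~\ref{lem:ell_n-upper-bound}). The coefficient $c_2=(1-\llog n_1/\log n_1)^{-1}\approx\ConUppCtwo$ comes from the trivial bound $x/k$ on the fractional parts of the first $\approx\omega(k)$ sieving steps, not from index shifts. Finally $\ell_n=n\rho_n(1-\tau_n)\le n\rho_n$. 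Your remark about a differential relation for $\ell_n$ pointed this way, but you set it aside for the product bound, which is exactly where the argument breaks. The direct check on $[4,10^7]$ matches the paper.
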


Note that, in light of asymptotic formula \eqref{equ:l_n-third-term}, upper bound \eqref{thm:upper:equ:2} is comparatively weaker than lower bound \eqref{thm:lower:equ:2}.
Indeed, it seems likely that \eqref{thm:upper:equ:1} is true for all integers $n \geq 4$.

The main strategy of the proof of Theorems~\ref{thm:lower} and~\ref{thm:upper}  mostly follows the ideas of Hawkins and Briggs~\cite{MR103866} (and their hint at Chowla's recursive method).
Indeed, Section~\ref{sec:preliminaries-lucky-numbers} replicates some of their arguments for the sake of completeness/notation.
The novelty is the way in which all error terms are made explicit and all inequalities are proved to hold over large ranges.

The proof requires some numerical computations.
The corresponding source code is available in a public repository~\cite{Repo} (more details in Section~\ref{sec:numerical-computations}).

The structure of this work is the following.
Section~\ref{sec:notation} provides the general notation.
Section~\ref{sec:preliminaries-lucky-numbers} collects some preliminary results on lucky numbers.
Section~\ref{sec:layout} sketches a layout of the proof of Theorems~\ref{thm:lower} and~\ref{thm:upper}.
Sections~\ref{sec:ell_n-first-lower-bound} through \ref{sec:ell_n-bounds} contains the proof with the exception of numerical computations, which are provided in Section~\ref{sec:numerical-computations}.


\section{Notation}\label{sec:notation}

If $x$ is a real number, then $\lfloor x \rfloor$ and $\lceil x \rceil$ denote the maximal integer less than or equal to $x$ and the minimal integer greater than or equal to $x$, respectively; and $\{x\} := x - \lfloor x \rfloor$.
The symbols $\neper$ and $\gamma$ denote the Napier constant $2.718...$ and the Euler--Mascheroni constant $0.577...$, respectively.
The notation for the natural logarithm is $\log$, while $\llog$ is a shorthand for $\log \log$.
The notation $f = \Theta\big(g\big)$ means that $0 \leq f \leq g$.
If $\mathcal{S}$ is a finite set, then $|\mathcal{S}|$ denotes the cardinality of $\mathcal{S}$.
By convention, the empty sum is equal to $0$ and the empty product is equal to $1$.


\section{Preliminaries on lucky numbers}\label{sec:preliminaries-lucky-numbers}

\subsection{Definition of lucky numbers}

This subsection provides a formal definition of the sequence of lucky numbers and introduces some related notation.
It is convenient to define the first lucky number to be $\ell_1 := 2$ (instead of $1$, as in the introduction).
Let $(\mathcal{L}_m)_{m \geq 1}$ be the sequence of sets of positive integers recursively defined as follows.
First, the set $\mathcal{L}_1$ consists of all positive integers and the set $\mathcal{L}_2$ consists of the number $2$ and all odd integers greater than or equal to $3$.
For every integer $m \geq 1$, let $(\ell_{m,n})_{n \geq 1}$ denote the sequence formed by the elements of $\mathcal{L}_m$ in increasing order.
Then, for every integer $m \geq 2$, the set $\mathcal{L}_{m+1}$ consists of all integers $\ell_{m,n}$ such that $n$ is a positive integer not divisible by $\ell_{m,m}$.
For every integer $n \geq 2$, the $n$th lucky number is $\ell_n := \ell_{n,n}$.

\subsection{Two key results}

For all integers $m \geq 2$, $n \geq 1$, and for all real numbers $x$, define
\begin{equation*}
	L_n(x) := \big|\mathcal{L}_n \cap [1, x]\big| ,
	\quad
	\rho_m := \prod_{i \,=\, 1}^{m - 1} \frac1{1 - 1/\ell_i} ,
	\quad
	\tau_{m, n} := \frac1{n} \sum_{i \,=\, 1}^{m - 1} \frac{\rho_{i+1}}{\rho_m} \left\{\frac{L_i(\ell_{m,n})}{\ell_i}\right\} ,
\end{equation*}
and $\tau_m := \tau_{m, m}$.
The following lemma provides a fundamental formula for $\ell_{m,n}$.

\begin{lemma}\label{lem:l_mn-formula}
	Let $m \geq 2$ and $n \geq 1$ be integers.
	Then $\ell_{m, n} = n \rho_m (1 - \tau_{m,n})$.
\end{lemma}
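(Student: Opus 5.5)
The plan is to derive a one-step recursion for the counting functions $L_i$ and then unwind it from $i = m-1$ down to $i = 1$, evaluated at $x = \ell_{m,n}$.

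\emph{Step 1: the one-step recursion.} Fix a real $x \geq 1$ and an integer $i \geq 2$. The elements of $\mathcal{L}_i \cap [1,x]$ are exactly $\ell_{i,1}, \dots, \ell_{i,L_i(x)}$. By definition, $\mathcal{L}_{i+1}$ is obtained by deleting those $\ell_{i,k}$ whose index $k$ is divisible by $\ell_{i,i} = \ell_i$. So among indices $k \leq L_i(x)$, exactly $\lfloor L_i(x)/\ell_i \rfloor$ are deleted, and
\begin{equation*}
L_{i+1}(x) = L_i(x) - \left\lfloor \frac{L_i(x)}{\ell_i} \right\rfloor = L_i(x)\Big(1 - \frac1{\ell_i}\Big) + \left\{\frac{L_i(x)}{\ell_i}\right\} .
\end{equation*}
The case $i = 1$ is special because $\mathcal{L}_2$ is defined directly rather than by the sieve rule. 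Here I will check by hand that the same identity holds with $\ell_1 = 2$, at least for integer $x \geq 1$. Indeed, $L_1(x) = x$, and $L_2(x) = 1 + |\{\text{odd } 3 \le k \le x\}| = \lfloor (x+1)/2\rfloor = x - \lfloor x/2 \rfloor$. This is the only place where the convention $\ell_1 := 2$ matters, and it is the step I would double-check most carefully. Since only integer $x = \ell_{m,n}$ is needed, the restriction to integers is harmless.

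\emph{Step 2: inversion and telescoping.} Write $f_i := \{L_i(x)/\ell_i\}$. The recursion inverts to
\begin{equation*}
L_i(x) = \frac{L_{i+1}(x) - f_i}{1 - 1/\ell_i} .
\end{equation*}
Starting from $L_1$ and substituting repeatedly, and using $\rho_{i+1} = \prod_{j \le i} (1-1/\ell_j)^{-1}$, induction on $m$ should give
\begin{equation*}
L_1(x) = \rho_m L_m(x) - \sum_{i=1}^{m-1} \rho_{i+1} f_i .
\end{equation*}
The inductive step replaces $\rho_m L_m(x)$ by $\rho_m (L_{m+1}(x) - f_m)/(1-1/\ell_m) = \rho_{m+1} L_{m+1}(x) - \rho_{m+1} f_m$.

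\emph{Step 3: specialization.} Take $x = \ell_{m,n}$, which is a positive integer lying in $\mathcal{L}_m$. Then $L_1(x) = \ell_{m,n}$ and $L_m(x) = n$. Hence
\begin{equation*}
\ell_{m,n} = n\rho_m - \sum_{i=1}^{m-1}\rho_{i+1}\left\{\frac{L_i(\ell_{m,n})}{\ell_i}\right\} = n\rho_m(1 - \tau_{m,n}),
\end{equation*}
which is exactly the definition of $\tau_{m,n}$.

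I expect no real obstacle. The only care needed is the following. First, the sieve at stage $i$ uses $\ell_{i,i}$, which is literally $\ell_i$ by definition, so no stabilization argument is required. Second, the indices deleted up to $x$ are precisely the multiples of $\ell_i$ in $\{1,\dots,L_i(x)\}$. Third, the base case $i=1$ must be handled separately, as in Step 1.
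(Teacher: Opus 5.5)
Your proof is correct and is essentially the paper's argument. Both use the same one-step recursion $L_{i+1}(x) = (1-1/\ell_i)L_i(x) + \{L_i(x)/\ell_i\}$, telescope it via $\rho_{i+1} = \rho_i/(1-1/\ell_i)$ (the paper unwinds forward to express $L_m(x)$, you invert and unwind to express $L_1(x)$), and then evaluate at $x = \ell_{m,n}$.

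There is one small correction in the step you flagged. The base identity $L_2(x) = x - \lfloor x/2 \rfloor$ fails at $x = 1$: since $1 \notin \mathcal{L}_2$, we have $L_2(1) = 0$, whereas $1 - \lfloor 1/2 \rfloor = 1$. Your count $1 + |\{\text{odd } 3 \le k \le x\}|$ includes the element $2$ even when $x < 2$. So the identity holds only for $x \geq 2$, which is exactly the restriction the paper states. This is harmless for the lemma, because $\ell_{m,n} \in \mathcal{L}_m \subseteq \mathcal{L}_2$ forces $\ell_{m,n} \geq 2$.
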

\begin{proof}
	Let $x \geq 2$ be a real number.
	If $m \geq 3$ then the definition of $\mathcal{L}_m$ in terms of $\mathcal{L}_{m-1}$ implies that
	\begin{equation}\label{lem:l_mn-formula:equ:1}
		L_m(x)
		= L_{m - 1}(x) - \left\lfloor \frac{L_{m - 1}(x)}{\ell_{m-1}} \right\rfloor
		= \left(1 - \frac1{\ell_{m-1}}\right) L_{m - 1}(x) + \left\{\frac{L_{m - 1}(x)}{\ell_{m-1}}\right\} .
	\end{equation}
	In addition, it is easy to check that \eqref{lem:l_mn-formula:equ:1} holds also for $m = 2$ (here it is necessary that $x \geq 2$).
	Repeatedly applying \eqref{lem:l_mn-formula:equ:1} and noting that $L_1(x) = \lfloor x \rfloor$ yields
	\begin{equation}\label{lem:l_mn-formula:equ:2}
		L_m(x) = \frac{\lfloor x \rfloor}{\rho_m}  + \sum_{i \,=\, 1}^{m - 1} \frac{\rho_{i+1}}{\rho_m} \left\{\frac{L_i(x)}{\ell_i}\right\} .
	\end{equation}
	The claim follows by plugging $x = \ell_{m,n}$ into \eqref{lem:l_mn-formula:equ:2} and noting that $L_m(\ell_{m,n}) = n$.
\end{proof}

The next lemma provides a sufficient condition for the equality of $\ell_n$ and $\ell_{m,n}$.

\begin{lemma}\label{lem:l_n-equal-l_mn}
	Let $m \geq 2$ and $n \geq 1$ be integers.
	Suppose that $\ell_m > n$.
	Then $\ell_n = \ell_{m, n}$.
\end{lemma}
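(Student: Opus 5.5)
The plan is to show that, once $\ell_m > n$, none of the sieving steps after step $m$ can affect the $n$th term. Two monotonicity facts are needed, and both follow from the definition.

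First, the map $k \mapsto \ell_k$ is nondecreasing, and in fact strictly increasing for $k \geq 2$. Second, for every $k \geq 2$, the passage from $\mathcal{L}_k$ to $\mathcal{L}_{k+1}$ removes only terms $\ell_{k,j}$ whose index $j$ is a multiple of $\ell_{k,k} = \ell_k$. In particular it removes only indices $j \geq \ell_k$. Hence the first $\ell_k - 1$ elements of $\mathcal{L}_k$ survive unchanged:
\begin{equation*}
\ell_{k+1,j} = \ell_{k,j} \qquad \text{for all } j < \ell_k .
\end{equation*}

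The monotonicity of $\ell_k$ I would establish as follows. Since $\ell_k \geq 2$, the index $k$ is not a multiple of $\ell_k$ when $k < \ell_k$. One checks $\ell_k \geq k$; this holds because each $\mathcal{L}_k$ consists of distinct positive integers, so $\ell_{k,j} \geq j$. When $\ell_k > k$, the displayed identity gives $\ell_{k+1,k} = \ell_{k,k}$ and $\ell_{k+1,k+1} \geq \ell_{k,k+1} > \ell_{k,k}$. Hence $\ell_{k+1} > \ell_k$. The border cases $k = 1, 2$ are checked directly, e.g.\ $\ell_2 = 3$ and $\ell_3 = 7$. To guarantee $\ell_k > k$ in general, I would argue inductively that $\ell_{k,j} > j$ for $j \geq 2$; this holds from $\mathcal{L}_2$ on, since removing elements only increases the $j$th term.

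With these facts, the main argument goes as follows. Fix $n$ and $m$ with $\ell_m > n$. We may assume $n \geq m$, since otherwise a separate easy case applies. For every $k \geq m$ we have $\ell_k \geq \ell_m > n$, so the identity gives $\ell_{k+1,n} = \ell_{k,n}$. By induction, $\ell_{k,n} = \ell_{m,n}$ for all $k \geq m$, and taking $k = n$ yields $\ell_n = \ell_{n,n} = \ell_{m,n}$.

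If instead $n < m$, I would use the inequality $\ell_n > n$, which holds by the monotonicity and is trivial for $n=1$. Applying the same stabilization from index $n$ up to $m$ gives $\ell_{m,n} = \ell_{n,n}$. The case $n = 1$ is handled directly: $\ell_{m,1} = 2 = \ell_1$ for all $m \geq 2$.

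The only genuinely delicate step is the monotonicity $\ell_{k+1} \geq \ell_k$, together with $\ell_k > k$, which the stabilization argument depends on. I expect it to be routine but to require care at the small indices $k = 1, 2$. This is because of the convention $\ell_1 = 2$ and the special definition of $\mathcal{L}_2$, which keeps the even number $2$. I would verify those initial cases by hand and then run the induction from $k = 2$.
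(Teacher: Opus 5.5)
Your proof is correct and follows the paper's argument. Your stabilization identity $\ell_{k+1,j} = \ell_{k,j}$ for $j < \ell_k$ is exactly the paper's implication $\ell_k > n \Rightarrow \ell_{k,n} = \ell_{k+1,n}$, applied along $k = m, \dots, n-1$ or $k = n, \dots, m-1$ according to the order of $m$ and $n$, with $n = 1$ handled separately; you also make explicit the facts $\ell_k > k$ and $\ell_{k+1} > \ell_k$, which the paper uses tacitly (the latter follows directly from $\ell_{k+1,k+1} \geq \ell_{k,k+1} > \ell_{k,k}$, without needing $\ell_k > k$).
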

\begin{proof}
	If $m = n$ then the claim is obvious.
    If $n = 1$ then the claim follows easily.
	Hence, assume that $m \neq n$ and $n \geq 2$.
	Let $k \geq 2$ be an integer.
	From the definition of $\mathcal{L}_{k + 1}$ it follows that
	\begin{equation}\label{lem:l_n-equal-l_mn:equ:1}
		\ell_k > n \;\Rightarrow\; \ell_{k, n} = \ell_{k + 1, n} .
	\end{equation}
	If $m < n$ then applying \eqref{lem:l_n-equal-l_mn:equ:1} for $k = m, m + 1, \dots, n - 1$ yields
	\begin{equation*}
		\ell_{m, n} = \ell_{m + 1, n} = \cdots = \ell_{n, n} = \ell_n ,
	\end{equation*}
	as desired.
	If $m > n$ then noting that $\ell_n > n$ and applying \eqref{lem:l_n-equal-l_mn:equ:1} for $k = n, n + 1, \dots, m - 1$ yields
	\begin{equation*}
		\ell_n = \ell_{n, n} = \ell_{n + 1, n} = \cdots = \ell_{m, n} ,
	\end{equation*}
	as desired.
\end{proof}


\section{Layout of the proof of Theorems~\ref{thm:lower} and~\ref{thm:upper}}\label{sec:layout}

This section sketch a layout of the proof of Theorems~\ref{thm:lower} and~\ref{thm:upper}.
The letters $c_1, c_2, \dots$ and $n_1, n_2, \dots$ denote explicit positive constants.

\subsection{First lower bound}\label{subsec:layout-first-lower-bound}

The proof begins by showing that
\begin{equation}\label{equ:ell_n-first-lower-bound}
	\ell_n > c_1 n \log n
\end{equation}
for all integers $n \geq n_1$ and a constant $c_1 < 1$ (Lemma~\ref{lem:ell_n-first-lower-bound}).

\subsection{First round}\label{subsec:layout-first-round}

The proof continues by showing that \eqref{equ:ell_n-first-lower-bound} implies that
\begin{equation}\label{equ:tau_n-upper-bound}
	\tau_n < \frac1{c_1} \!\left(\frac{\llog n + c_2}{\log n} + c_3 \!\left(\frac{\llog n}{\log n}\right)^{\!2} \right)
\end{equation}
for all integers $n \geq n_2$ (Lemma~\ref{lem:tau_n-upper-bound}).
Then the proof proceeds by showing that \eqref{equ:ell_n-first-lower-bound} and \eqref{equ:tau_n-upper-bound} imply that
\begin{equation}\label{equ:ell_n-upper-bound}
	\ell_n < n \!\left(\log n + \frac{\tfrac1{2} (\llog n)^2 + c_2 \llog n + c_4}{c_1}\right)
\end{equation}
for all integers $n \geq n_2$ (Lemma~\ref{lem:ell_n-upper-bound}).
As a next step, the proof shows that \eqref{equ:ell_n-first-lower-bound} and \eqref{equ:ell_n-upper-bound} imply that
\begin{equation}\label{equ:tau_n-lower-bound}
	\tau_n > \frac{\llog n - c_5}{\log n} - \frac{c_6(\llog n)^3}{(\log n)^2}
\end{equation}
for all integers $n \geq n_3$ (Lemma~\ref{lem:tau_n-lower-bound}).
The proof continues by showing that \eqref{equ:tau_n-upper-bound} and \eqref{equ:tau_n-lower-bound} imply that
\begin{equation}\label{equ:ell_n-second-lower-bound}
	\ell_n > n \big(\!\log n + \tfrac1{2}(\llog n)^2 - c_7 \llog n - c_8\big)
\end{equation}
for all integers $n \geq n_3$ (Lemma~\ref{lem:ell_n-second-lower-bound}).

\subsection{Bootstrapping}

From \eqref{equ:ell_n-second-lower-bound} it follows that \eqref{thm:lower:equ:1}, the first lower bound of Theorem~\ref{thm:lower}, is true for all sufficiently large integers $n$, say $n \geq n_4$.
The constant $n_4$ is quite large, but an ad~hoc reasoning (Lemma~\ref{lem:ell_n-finite-range}) shows that \eqref{thm:lower:equ:1} holds also for the positive integers less than $n_4$.
This proves that \eqref{thm:lower:equ:1} is true for all integers $n \geq 1$, as desired.

\subsection{Second round}

In light of \eqref{thm:lower:equ:1} holding for all integers $n \geq 1$, the proof repeats the reasoning of Subsection~\ref{subsec:layout-first-round} setting $c_1 = 1$ from the beginning.
This proves inequality \eqref{thm:lower:equ:2} and completes the proof of Theorem~\ref{thm:lower}.

\subsection{Third half-round}

At this stage, the proof already gave inequality \eqref{equ:ell_n-upper-bound}, which is an upper bound on $\ell_n$ of the same form of \eqref{thm:upper:equ:2} but with slightly worse constants.
Only to obtain better constants, the proof repeats the reasoning of Subsection~\ref{subsec:layout-first-round} up to inequality \eqref{equ:ell_n-upper-bound} setting $c_1 = 1$ and employing a larger $n_1$.
This yields \eqref{thm:upper:equ:2}.
Finally, a direct computation proves \eqref{thm:upper:equ:1} and completes the proof of Theorem~\ref{thm:upper}.


\section{First lower bounds on \texorpdfstring{$\ell_n$}{ell\_n}}\label{sec:ell_n-first-lower-bound}

This section is devoted to prove some lower bounds on $\ell_n$.
For every integer $n \geq 2$, define
\begin{equation}\label{equ:varrho_n-definition}
	\varrho_n := \rho_n - 1 - \sum_{k \,=\, 1}^{n - 1} \frac1{k} .
\end{equation}
The next lemma collects some properties of $\varrho_n$.

\begin{lemma}\label{lem:varrho_n-basics}
	Let $n$ and $n_0$ be integers such that $n \geq n_0 \geq 2$.
	Then
	\begin{align}
		\rho_n
		&= \log n + \varrho_n + \gamma + 1 - \Theta\!\left(\frac{0.542}{n}\right) , \label{lem:varrho_n-basics:equ:1} \\
		\varrho_n
		&= \sum_{k \,=\, 2}^{n - 1} \frac1{k}\!\left(\frac1{(1 - 1/\ell_k)(1 - \tau_k)} - 1\right) , \label{lem:varrho_n-basics:equ:2} \\
		\varrho_n &\geq \varrho_{n_0} + \sum_{k \,=\, n_0}^{n - 1} \frac{\tau_k}{k} \label{lem:varrho_n-basics:equ:3} .
	\end{align}
    Furthermore, the quantity $\varrho_n$ is an increasing function of $n$.
\end{lemma}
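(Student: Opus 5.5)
The plan is to prove the four assertions of Lemma~\ref{lem:varrho_n-basics} in the order \eqref{lem:varrho_n-basics:equ:2}, monotonicity, \eqref{lem:varrho_n-basics:equ:3}, \eqref{lem:varrho_n-basics:equ:1}, since each uses the previous ones. The starting point is the telescoping identity
\[
\rho_{k+1} - \rho_k = \rho_k\Big(\frac{1}{1 - 1/\ell_k} - 1\Big) = \frac{\rho_k}{\ell_k - 1}.
\]
Apply Lemma~\ref{lem:l_mn-formula} with $m = n = k$ to get $\ell_k = k\rho_k(1 - \tau_k)$, so $\rho_k = \ell_k/(k(1-\tau_k))$ for $k \geq 2$. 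Hence
\[
\rho_{k+1} - \rho_k = \frac{\ell_k}{k(1-\tau_k)(\ell_k - 1)} = \frac{1}{k}\cdot\frac{1}{(1 - 1/\ell_k)(1-\tau_k)} .
\]
Also $\rho_2 = 1/(1 - 1/\ell_1) = 2$, since $\ell_1 = 2$, so $\varrho_2 = 2 - 1 - 1 = 0$. Summing the increments for $k = 2, \dots, n-1$ gives $\rho_n = 2 + \sum_{k=2}^{n-1} \frac1k \cdot \frac{1}{(1-1/\ell_k)(1-\tau_k)}$. Subtracting $1 + \sum_{k=1}^{n-1} 1/k = 2 + \sum_{k=2}^{n-1} 1/k$ then yields \eqref{lem:varrho_n-basics:equ:2}.

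For monotonicity, note that $\tau_k \geq 0$, because it is a sum of fractional parts with positive weights. Also $\tau_k < 1$, because $\ell_k > 0$ in Lemma~\ref{lem:l_mn-formula}, and $\ell_k > 1$. So each summand in \eqref{lem:varrho_n-basics:equ:2} is positive, and $\varrho_n$ is increasing. For \eqref{lem:varrho_n-basics:equ:3}, bound each summand below using $1/(1-1/\ell_k) \geq 1$ and $1/(1-\tau_k) \geq 1 + \tau_k$. This gives $\frac1k\bigl(\frac{1}{(1-1/\ell_k)(1-\tau_k)} - 1\bigr) \geq \tau_k/k$. Then split the sum into $k < n_0$, which equals $\varrho_{n_0}$, and $n_0 \leq k \leq n-1$.

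For \eqref{lem:varrho_n-basics:equ:1}, the definition \eqref{equ:varrho_n-definition} gives $\rho_n = 1 + \varrho_n + H_{n-1}$, so it suffices to show $H_{n-1} = \log n + \gamma - \Theta(0.542/n)$. Write $H_{n-1} = H_n - 1/n$ and use the standard bounds
\[
\frac{1}{2n} - \frac{1}{12n^2} \leq H_n - \log n - \gamma \leq \frac{1}{2n}.
\]
These give $0 \leq \log n + \gamma - H_{n-1} \leq \frac{1}{2n} + \frac{1}{12 n^2} \leq \frac{1}{n}\bigl(\frac12 + \frac1{24}\bigr) = \frac{0.541\overline{6}}{n}$ for $n \geq 2$, which is within $0.542/n$.

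The only mildly delicate step is this last one, because it relies on an explicit Euler–Maclaurin bound for the harmonic numbers. If a cleaner route is wanted, I would instead show directly that $g(n) := n(\log n + \gamma - H_{n-1})$ lies in $[0, 0.542]$. For this I would use the monotonicity of $\log n + \gamma - H_{n-1}$ in $n$, which follows from $\log(1 + 1/n) < 1/n$, to get nonnegativity, and check the maximum of $g$ at $n = 2$: there $g(2) = 2(\log 2 + \gamma - 1) \approx 0.541$.
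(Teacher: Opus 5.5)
Your proof is correct and follows essentially the same route as the paper. Both use the telescoping identity $\rho_{k+1}-\rho_k=\frac{1}{k(1-1/\ell_k)(1-\tau_k)}$ from Lemma~\ref{lem:l_mn-formula} with $\rho_2=2$, the bound $1/(1-\tau_k)\geq 1+\tau_k$ for \eqref{lem:varrho_n-basics:equ:3}, and the harmonic-number bounds giving $\frac{1}{2n}+\frac{1}{12n^2}\leq\frac{13}{24n}<\frac{0.542}{n}$. Deriving monotonicity directly from the positivity of the summands in \eqref{lem:varrho_n-basics:equ:2} is only a reordering.

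The alternative in your last paragraph is unnecessary and, as stated, incomplete. That $\log n+\gamma-H_{n-1}$ decreases does not by itself show that $n(\log n+\gamma-H_{n-1})$ is maximized at $n=2$, so you would still need an explicit upper bound such as the Euler--Maclaurin one.
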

\begin{proof}
    A well-known asymptotic for the $n$th harmonic number (see, e.g., \cite[Theorem~0.8]{MR3363366}) states that
	\begin{equation}\label{lem:varrho_n-basics:equ:4}
		\sum_{k \,=\, 1}^n \frac1{k}
		= \log n + \gamma + \frac1{2n} - \Theta\!\left(\frac1{12n^2}\right) .
	\end{equation}
    Then \eqref{equ:varrho_n-definition} and \eqref{lem:varrho_n-basics:equ:4} imply that
    \begin{equation*}
        \rho_n
        = \varrho_n + 1 + \sum_{k \,=\, 1}^{n-1} \frac1{k}
        = \varrho_n + 1 - \frac1{n} + \sum_{k \,=\, 1}^n \frac1{k}
        = \log n + \varrho_n + \gamma + 1 - \frac1{2n} - \Theta\!\left(\frac1{12n^2}\right) ,
    \end{equation*}
    which in turn gives \eqref{lem:varrho_n-basics:equ:1}.
	Let $k \geq 2$ be an integer.
	The definition of $\rho_k$ implies that
	\begin{equation}\label{lem:varrho_n-basics:equ:5}
		\frac{\rho_k}{\rho_{k+1}}
		= 1 - \frac1{\ell_k} .
	\end{equation}
	From \eqref{lem:varrho_n-basics:equ:5} and Lemma~\ref{lem:l_mn-formula} it follows that
	\begin{equation*}
		\rho_{k+1} - \rho_k
		= \rho_{k+1} \left(1 - \frac{\rho_k}{\rho_{k+1}}\right)
		= \frac{\rho_{k+1}}{\ell_k}
		= \frac{\rho_{k+1}}{k \rho_k (1 - \tau_k)}
		= \frac1{k (1 - 1/\ell_k) (1 - \tau_k)}
	\end{equation*}
	and so
	\begin{equation}\label{lem:varrho_n-basics:equ:6}
		\rho_n
		= \rho_2 + \sum_{k \,=\, 2}^{n - 1} (\rho_{k + 1} - \rho_k)
		= 2 + \sum_{k \,=\, 2}^{n - 1} \frac1{k (1 - 1/\ell_k) (1 - \tau_k)} .
	\end{equation}
	Then \eqref{equ:varrho_n-definition} and \eqref{lem:varrho_n-basics:equ:6} imply \eqref{lem:varrho_n-basics:equ:2}.
	Since $\ell_k > 1$ and $\tau_k < 1$, it follows that
    \begin{equation*}
        \frac1{(1 - 1/\ell_k)(1 - \tau_k)}
        > \frac1{1 - \tau_k}
        \geq 1 + \tau_k ,
    \end{equation*}
    which in tandem with \eqref{lem:varrho_n-basics:equ:2} yields \eqref{lem:varrho_n-basics:equ:3}.
    Finally, from \eqref{lem:varrho_n-basics:equ:3} and the fact that $\tau_k \geq 0$ it follows that $\varrho_n$ is an increasing function of $n$.
\end{proof}

The following lemma provides a first lower bound on $\ell_n$.

\begin{lemma}\label{lem:ell_n-first-lower-bound}
	Let $n_0 \geq 2$ be an integer and define
	\begin{align}
		c_0 &:= \varrho_{n_0} + \gamma + 1 - \frac{0.542}{n_0} , \label{equ:constant-c0} \\
		c_1 &:= 1 - \neper^{-c_0} , \nonumber \\
		n_1 &:= \lceil \neper^{c_0} n_0 \rceil . \nonumber
	\end{align}
	Then \eqref{equ:ell_n-first-lower-bound} holds for all integers $n \geq n_1$.
\end{lemma}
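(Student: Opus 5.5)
The plan is to compare $\ell_n$ with $\ell_{m,n}$ for a suitably smaller index $m$. Lemma~\ref{lem:l_mn-formula} then gives a lower bound on $\ell_{m,n}$ with an error term that is easy to control when $m$ is a small fraction of $n$. The shape of the constants ($c_1 = 1 - \neper^{-c_0}$, $n_1 = \lceil \neper^{c_0} n_0\rceil$) suggests taking $m \approx n\neper^{-c_0}$.

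\textbf{Step 1: monotonicity in the sieve stage.} First I would observe that $\ell_n \geq \ell_{m,n}$ whenever $2 \leq m \leq n$. Indeed, $\mathcal{L}_{k+1} \subseteq \mathcal{L}_k$, so the $n$th smallest element of $\mathcal{L}_{k+1}$ is at least the $n$th smallest element of $\mathcal{L}_k$. Hence $\ell_{k,n} \leq \ell_{k+1,n}$, and chaining from $k = m$ to $k = n-1$ gives $\ell_{m,n} \leq \ell_{n,n} = \ell_n$.

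\textbf{Step 2: bounding $\tau_{m,n}$ crudely.} Each fractional part is less than $1$, and $\rho_{i+1} \leq \rho_m$ for $i \leq m-1$ because $\rho$ is increasing. Hence
\begin{equation*}
\tau_{m,n} < \frac1{n}\sum_{i=1}^{m-1} 1 = \frac{m-1}{n}.
\end{equation*}
Together with Lemma~\ref{lem:l_mn-formula} this yields $\ell_{m,n} > n\rho_m\bigl(1 - \tfrac{m-1}{n}\bigr)$. (For $m = 2$ the sum has one term and the bound is still strict.)

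\textbf{Step 3: bounding $\rho_m$ from below.} For $m \geq n_0$, Lemma~\ref{lem:varrho_n-basics} gives $\rho_m \geq \log m + \varrho_m + \gamma + 1 - 0.542/m$. Since $\varrho_m$ is increasing, $\varrho_m \geq \varrho_{n_0}$, and also $0.542/m \leq 0.542/n_0$. So $\rho_m \geq \log m + c_0$. I note that $c_0 > 0$, since $\varrho_{n_0} \geq 0$ by \eqref{lem:varrho_n-basics:equ:2} and $\gamma + 1 - 0.542/n_0 > 0$.

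\textbf{Step 4: choice of $m$.} Set $m := \lceil n\neper^{-c_0}\rceil$.
\begin{itemize}
\item Since $n \geq n_1 \geq \neper^{c_0} n_0$, we get $m \geq n_0 \geq 2$; and $m \leq n$ because $c_0 > 0$.
\item Since $m \geq n\neper^{-c_0}$, we have $\log m + c_0 \geq \log n$.
\item Since $m - 1 < n\neper^{-c_0}$, we have $1 - (m-1)/n > 1 - \neper^{-c_0} = c_1$.
\end{itemize}
Combining Steps 1--3 gives $\ell_n \geq \ell_{m,n} > n \log n \cdot c_1$, which is \eqref{equ:ell_n-first-lower-bound}.

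The only delicate point is the rounding in Step 4. I chose the ceiling rather than the floor precisely so that both inequalities go the right way: the log term needs $m$ large and the $\tau$ term needs $m-1$ small. I also need to make sure the strict inequality survives, which it does because the bound on $\tau_{m,n}$ is strict. The rest is routine bookkeeping.
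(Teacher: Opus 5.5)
Your proposal is correct and follows the paper's proof step for step. It uses the same choice $m = \lceil \neper^{-c_0} n \rceil$, the same bound $\rho_m \geq \log m + c_0$ from the monotonicity of $\varrho_m$, and the same crude bound $\tau_{m,n} \le (m-1)/n$. You also make explicit several points the paper leaves implicit, all correctly: that $\ell_n \geq \ell_{m,n}$ follows from $\mathcal{L}_{k+1} \subseteq \mathcal{L}_k$, that $c_0 > 0$, and that $m \le n$.
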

\begin{proof}
	Let $m \geq n_0$ be an integer.
    Lemma~\ref{lem:varrho_n-basics} states that $\varrho_n$ is an increasing function of $n$.
    This fact and Eq.\ \eqref{lem:varrho_n-basics:equ:1} of Lemma~\ref{lem:varrho_n-basics} imply that
	\begin{equation}\label{lem:ell_n-first-lower-bound:equ:3}
		\rho_m
		\geq \log m + \varrho_m + \gamma + 1 - \frac{0.542}{m}
		\geq \log m + \varrho_{n_0} + \gamma + 1 - \frac{0.542}{n_0}
		= \log m + c_0 .
	\end{equation}
    Let $n \geq 1$ be an integer.
    Lemma~\ref{lem:l_mn-formula}, \eqref{lem:ell_n-first-lower-bound:equ:3}, and the fact that $\tau_{m,n} \leq (m - 1)/n$ imply that
    \begin{equation}\label{lem:ell_n-first-lower-bound:equ:4}
        \ell_n
        \geq \ell_{m,n}
        = n \rho_m (1 - \tau_{m,n})
        \geq n (\log m + c_0) \!\left(1 - \frac{m - 1}{n}\right) .
    \end{equation}
    Suppose that $n \geq n_1$ and pick $m := \lceil \neper^{-c_0} n \rceil$.
    Hence $m \geq n_0$ so that \eqref{lem:ell_n-first-lower-bound:equ:4} holds.
    From \eqref{lem:ell_n-first-lower-bound:equ:4} and the inequalities $\neper^{-c_0} n \leq m < \neper^{-c_0} n + 1$ it follows that
    \begin{equation*}
        \ell_n
        > n (\log n - c_0 + c_0) (1 - \neper^{-c_0})
        = c_1 n \log n ,
    \end{equation*}
    which is \eqref{equ:ell_n-first-lower-bound}, as desired.
\end{proof}

The next lemma shows that \eqref{thm:lower:equ:1} holds over a finite, but large, range of integers.

\begin{lemma}\label{lem:ell_n-finite-range}
    Let $n_0 \geq 2$ be an integer, let $c_0$ be given by \eqref{equ:constant-c0}, let $t \leq 1$ be a real number, and define
    \begin{align*}
        m_1 &:= \left\lceil \neper^{c_0 t} n_0 \right\rceil , \\
        m_2 &:= \left\lfloor \exp\!\left(c_0(1 - t)(\neper^{c_0 t} - 1)\right) \right\rfloor .
    \end{align*}
    Then \eqref{thm:lower:equ:1} holds for all integers $n \in [m_1, m_2]$.
\end{lemma}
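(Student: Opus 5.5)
We reuse the argument of Lemma~\ref{lem:ell_n-first-lower-bound}, but with the free parameter $t$ in the choice of $m$, and we check that the loss is acceptable on the stated range. First we argue as in the proof of Lemma~\ref{lem:ell_n-first-lower-bound}. By Lemma~\ref{lem:varrho_n-basics} the quantity $\varrho_m$ is increasing, so by \eqref{lem:varrho_n-basics:equ:1} we get $\rho_m \ge \log m + c_0$ for every integer $m \ge n_0$. Combined with Lemma~\ref{lem:l_mn-formula}, the bound $\tau_{m,n} \le (m-1)/n$, and $\ell_n \ge \ell_{m,n}$ (since the sets $\mathcal{L}_m$ decrease), this yields \eqref{lem:ell_n-first-lower-bound:equ:4}:
\begin{equation*}
    \ell_n \ge n(\log m + c_0)\Big(1 - \frac{m-1}{n}\Big)
\end{equation*}
for all $m \ge n_0$ and all $n \ge 1$.

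Now take $n \in [m_1, m_2]$ and choose $m := \lceil \neper^{-c_0 t} n\rceil$. Then $m \ge n_0$ because $n \ge m_1 \ge \neper^{c_0 t} n_0$. We also have $\log m \ge \log n - c_0 t$ and $(m-1)/n < \neper^{-c_0 t}$. Hence
\begin{equation*}
    \ell_n > n\big(\log n + c_0(1-t)\big)\big(1 - \neper^{-c_0 t}\big).
\end{equation*}
This step needs $\log n - c_0 t + c_0 > 0$, which holds because $t \le 1$, $c_0>0$ and $n \ge 1$. It therefore suffices to show that
\begin{equation*}
    \big(\log n + c_0(1-t)\big)\big(1-\neper^{-c_0 t}\big) \ge \log n,
\end{equation*}
which is equivalent to
\begin{equation*}
    c_0(1-t)\big(1-\neper^{-c_0 t}\big) \ge \neper^{-c_0 t}\log n .
\end{equation*}
Multiplying by $\neper^{c_0 t}$, this becomes $\log n \le c_0(1-t)(\neper^{c_0 t}-1)$. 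That is exactly the condition $n \le \exp\big(c_0(1-t)(\neper^{c_0t}-1)\big)$, which follows from $n \le m_2$.

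The main subtlety is the sign bookkeeping. Multiplying by $1-\neper^{-c_0t}$ requires $c_0 t > 0$. If $t \le 0$, then $m_2 \le 1$ and the statement is either vacuous or reduces to a trivial check at $n=1$ (where $\ell_1 = 2 > 0$). So we may assume $0 < t \le 1$, which gives $1-\neper^{-c_0 t} > 0$. We also need $c_0 > 0$. This holds because $\varrho_{n_0} \ge \varrho_2 = \rho_2 - 2 = 0$, so $c_0 \ge \gamma + 1 - 0.271 > 0$. Finally, the strict inequality comes from $m < \neper^{-c_0t}n + 1$.
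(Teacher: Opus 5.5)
Your proposal is correct and follows essentially the same route as the paper: choose $m := \lceil \neper^{-c_0 t} n\rceil$, apply \eqref{lem:ell_n-first-lower-bound:equ:4}, and note that $\big(\log n + c_0(1-t)\big)\big(1-\neper^{-c_0 t}\big) \geq \log n$ is exactly the condition $n \leq \exp\big(c_0(1-t)(\neper^{c_0 t}-1)\big)$. Your extra sign checks are correct and are left implicit in the paper: $c_0 > 0$ via $\varrho_{n_0} \geq \varrho_2 = 0$, and the case $t \leq 0$ is trivial because the range is then empty.
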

\begin{proof}
    Continue from the end of the proof of Lemma~\ref{lem:ell_n-first-lower-bound}.
    Let $n \in [m_1, m_2]$ be an integer and pick $m := \lceil \neper^{-c_0 t} n \rceil$.
    Since $n \geq \neper^{c_0 t} n_0$, it follows that $m \geq n_0$ and so \eqref{lem:ell_n-first-lower-bound:equ:4} holds.
    Moreover, from \eqref{lem:ell_n-first-lower-bound:equ:4} and the inequalities
    \begin{equation*}
        \neper^{-c_0 t} n \leq m < \neper^{-c_0 t} n + 1
    \end{equation*}
    and
    \begin{equation*}
        n \leq \exp\!\left(c_0(1 - t)(\neper^{c_0 t} - 1)\right) ,
    \end{equation*}
    it follows that
    \begin{align*}
        \ell_n
        > n \big(\!\log n + c_0(1 - t)\big) \big(1 - \neper^{-c_0 t}\big)
        = \left(1 + \frac{c_0 (1 - t)}{\log n}\right) \!\big(1 - \neper^{-c_0 t}\big) \, n \log n
        \geq n \log n ,
    \end{align*}
    which is \eqref{thm:lower:equ:1}, as desired.
\end{proof}


\section{Interlude: Some technical lemmas}

This section collects some technical results that are needed in later proofs.

\subsection{Some estimates}

The next lemma provides an estimate for a certain sum of products.

\begin{lemma}\label{lem:prod-vs-sum}
	Let $x_1, \dots, x_n \in (0, 1)$.
	Then
	\begin{equation}\label{lem:prod-vs-sum:equ:1}
		\sum_{i \,=\, 1}^n x_i \!\!\prod_{j \,=\, i + 1}^n (1 - x_j)
		= \sum_{i \,=\, 1}^n x_i - \Theta\Big(\tfrac1{2}\Big(\sum_{i \,=\, 1}^n x_i\Big)^2\Big) .
	\end{equation}
\end{lemma}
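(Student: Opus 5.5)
Recall the notation: $f = \Theta(g)$ means $0 \leq f \leq g$. So the claim amounts to two inequalities. Writing $S := \sum_{i=1}^n x_i$ and $P := \sum_{i=1}^n x_i \prod_{j=i+1}^n (1 - x_j)$, we must show
\begin{equation*}
    S - \tfrac1{2} S^2 \leq P \leq S .
\end{equation*}
The upper bound is immediate: each factor $1 - x_j$ lies in $(0,1)$, so each product is at most $1$, and hence $P \leq S$.

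For the lower bound, I would use the telescoping identity
\begin{equation*}
    P = 1 - \prod_{j=1}^n (1 - x_j) .
\end{equation*}
It follows by induction on $n$: setting $Q_n := \prod_{j=1}^n (1-x_j)$, one has $P_n = P_{n-1}(1-x_n) + x_n$, and $1 - Q_n = (1 - Q_{n-1})(1 - x_n) + x_n$, so both quantities satisfy the same recursion with the same initial value.

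The task then reduces to the Bonferroni-type inequality
\begin{equation*}
    \prod_{j=1}^n (1 - x_j) \leq 1 - S + \tfrac1{2} S^2 .
\end{equation*}
I would prove it via $1 - x \leq \neper^{-x}$, which gives $\prod_j (1-x_j) \leq \neper^{-S}$. Next I would use $\neper^{-S} \leq 1 - S + S^2/2$, valid for all $S \geq 0$. This holds because $g(S) := 1 - S + S^2/2 - \neper^{-S}$ satisfies $g(0) = 0$, $g'(0) = 0$, and $g''(S) = 1 - \neper^{-S} \geq 0$.

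Combining these gives $P \geq S - \tfrac1{2}S^2$, completing the proof. No step is a real obstacle. The only point needing care is the telescoping identity, and an alternative route is available: the second-order Bonferroni bound $\prod(1-x_j) \leq 1 - S + \sum_{i<j} x_i x_j$ together with $\sum_{i<j} x_i x_j \leq S^2/2$.
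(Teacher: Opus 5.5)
Your proposal is correct and follows essentially the same route as the paper. The paper also notes the trivial upper bound, proves $P = 1-\prod_j(1-x_j)$ by induction, and then uses $1-x\le \neper^{-x}$ together with $\neper^{-S}\le 1-S+\tfrac12 S^2$ for $S \geq 0$.
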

\begin{proof}
	The inequality
	\begin{equation}\label{lem:prod-vs-sum:equ:2}
		\sum_{i \,=\, 1}^n x_i \!\!\prod_{j \,=\, i + 1}^n (1 - x_j)
		\leq \sum_{i \,=\, 1}^n x_i
	\end{equation}
	is straightforward.
	It follows easily by induction on $n$ that
	\begin{equation}\label{lem:prod-vs-sum:equ:3}
		\sum_{i \,=\, 1}^n x_i \!\!\prod_{j \,=\, i + 1}^n (1 - x_j)
		= 1 - \prod_{i \,=\, 1}^n (1 - x_i) .
	\end{equation}
	Since $\neper^{-x} = 1 - x + \Theta(x^2 \!/ 2)$ for every real number $x \geq 0$, it follows that
	\begin{equation}\label{lem:prod-vs-sum:equ:4}
		1 - \prod_{i \,=\, 1}^n (1 - x_i)
		\geq 1 - \prod_{i \,=\, 1}^n \neper^{-x_i}
		= 1 - \exp\!\Big(\!-\sum_{i \,=\, 1}^n x_i\Big)
		\geq \sum_{i \,=\, 1}^n x_i - \tfrac1{2}\Big(\sum_{i \,=\, 1}^n x_i\Big)^2 .
	\end{equation}
	Putting together \eqref{lem:prod-vs-sum:equ:2}, \eqref{lem:prod-vs-sum:equ:3}, and \eqref{lem:prod-vs-sum:equ:4} yields \eqref{lem:prod-vs-sum:equ:1}.
\end{proof}

The following lemma is a well-known estimate of a sum by an integral.
The proof is given just for completeness.

\begin{lemma}\label{lem:sum-to-integral}
	Let $a$ and $b$ be real numbers such that $a < b$ and let $f \colon [a, b] \to {[0, +\infty)}$ be a monotone decreasing function with continuous derivative.
	Then
	\begin{equation}\label{lem:sum-to-integral:equ:2}
		\sum_{a \,<\, n \,<\, b} f(n)
		= \int_a^b f(t)\,\mathrm{d}t \pm \Theta\big(f(a)\big) .
	\end{equation}
\end{lemma}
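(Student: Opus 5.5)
The argument is the standard comparison of a sum with an integral for monotone functions; in the notation of the paper the claim means that the sum differs from $\int_a^b f$ by an amount lying in $[-f(a), f(a)]$. I would carry it out in four steps.

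\emph{Step 1: set up the integers.} Let $N$ be the set of integers $n$ with $a < n < b$. If $N$ is empty, the sum is $0$. The integral satisfies $0 \leq \int_a^b f \leq (b-a) f(a)$, and $b - a \leq 1$ in this case except in the degenerate situation where $a, b$ are consecutive integers, where also $b - a = 1$. So $\bigl|\int_a^b f - 0\bigr| \leq f(a)$, as required. Otherwise, write $N = \{m, m+1, \dots, M\}$ with $a < m \leq M < b$.

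\emph{Step 2: upper bound on the sum.} For each $n \in N$, monotonicity gives $f(n) \leq \int_{n-1}^{n} f(t)\,\mathrm{d}t$ whenever $n - 1 \geq a$. The only term for which this can fail is $n = m$, and we simply bound $f(m) \leq f(a)$. Summing the remaining terms gives
\begin{equation*}
\sum_{n=m+1}^{M} f(n) \leq \int_m^M f \leq \int_a^b f,
\end{equation*}
using $f \geq 0$. Hence the sum is at most $\int_a^b f + f(a)$.

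\emph{Step 3: lower bound on the sum.} For $n \in N$ we have $f(n) \geq \int_n^{n+1} f$ whenever $n + 1 \leq b$; the last term may need to be truncated to $\int_M^{b} f$. Since $b - M \leq 1$, the truncated piece still satisfies $f(M) \geq \int_M^b f$. Summing gives
\begin{equation*}
\sum_{n \in N} f(n) \geq \int_m^b f = \int_a^b f - \int_a^m f \geq \int_a^b f - (m - a) f(a) \geq \int_a^b f - f(a),
\end{equation*}
because $m - a \leq 1$.

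\emph{Step 4: conclude.} Steps 2 and 3 together show the sum lies within $f(a)$ of the integral, which is the claim. The continuity of $f'$ is not needed; monotonicity and nonnegativity suffice. The only delicate point is the bookkeeping at the endpoints when $a$ or $b$ are or are not integers, and this is handled by the bounds $m - a \leq 1$ and $b - M \leq 1$.
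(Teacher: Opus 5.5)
Your proof is correct, and it takes a different route from the paper. The paper starts from the first-order Euler--Maclaurin (Euler summation) formula $\sum_{a<n\le b} f(n) = \int_a^b f(t)\,\mathrm{d}t + \{a\}f(a) - \{b\}f(b) + \int_a^b \{t\} f'(t)\,\mathrm{d}t$. It uses $f' \le 0$ to place the last integral in $[-(f(a)-f(b)), 0]$. It then splits into the cases $b \in \mathbb{Z}$, where the term $f(b)$ must be removed from the sum, and $b \notin \mathbb{Z}$.

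You instead compare each term directly with the integral over an adjacent unit interval: the interval to the left for the upper bound, the interval to the right for the lower bound. The two boundary defects are absorbed via $m - a \le 1$ and $b - M \le 1$, and your only case split is whether $(a,b)$ contains an integer at all.

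Your approach buys generality and economy. It uses only monotonicity and nonnegativity, so the continuous-derivative hypothesis can indeed be dropped (monotone functions are Riemann integrable). It also avoids the fractional-part bookkeeping. The paper's approach buys the sharper intermediate identity $\sum_{a<n\le b} f(n) = \int_a^b f(t)\,\mathrm{d}t + \{a\}f(a) - \{b\}f(b) - \Theta\bigl(f(a)-f(b)\bigr)$, which carries more information than $\pm f(a)$. Only the crude form is used later, in Lemma~\ref{lem:omega-sum}, where $f(t) = 1/(t\log t)$ is smooth anyway.

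One small point on Step~1: the remark about a ``degenerate situation'' is unnecessary and slightly muddled. The clean statement is that if $(a,b)$ contains no integer then $b - a \le 1$, since otherwise $\lfloor a \rfloor + 1 \in (a,b)$.
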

\begin{proof}
	From Euler--MacLaurin formula \cite[Proposition~1.2]{MR2919246} it follows that
	\begin{equation}\label{lem:sum-to-integral:equ:3}
		\sum_{a \,<\, n \,\leq\, b} f(n)
		= \int_a^b f(t)\,\mathrm{d}t + \{a\}f(a) - \{b\}f(b) + \int_a^b \{t\}f^\prime(t)\,\mathrm{d}t .
	\end{equation}
	Since $f$ is monotone decreasing, the inequality $f^\prime(t) \leq 0$ holds for every $t \in [a, b]$.
	Hence
	\begin{equation}\label{lem:sum-to-integral:equ:4}
		0 \leq -\int_a^b \{t\}f^\prime(t)\,\mathrm{d}t \leq -\int_a^b f^\prime(t)\,\mathrm{d}t = f(a) - f(b) .
	\end{equation}
	Putting together \eqref{lem:sum-to-integral:equ:3} and \eqref{lem:sum-to-integral:equ:4} yields
	\begin{equation}\label{lem:sum-to-integral:equ:1}
		\sum_{a \,<\, n \,\leq\, b} f(n)
		= \int_a^b f(t)\,\mathrm{d}t + \{a\}f(a) - \{b\}f(b) - \Theta\big(f(a) - f(b)\big) .
	\end{equation}
	If $b$ is an integer, then \eqref{lem:sum-to-integral:equ:1} and the fact that $f$ is monotone decreasing and nonnegative imply that
	\begin{align*}
		\sum_{a \,<\, n \,<\, b} f(n)
		&= \Big(\sum_{a \,<\, n \,\leq\, b} f(n) \Big) - f(b)
		= \int_a^b f(t)\,\mathrm{d}t + \{a\}f(a) - \Theta\big(f(a) - f(b)\big) - f(b) \\
		&= \int_a^b f(t)\,\mathrm{d}t \pm \Theta\big(f(a)\big) ,
	\end{align*}
	which is \eqref{lem:sum-to-integral:equ:2}.
	If $b$ is not an integer, then \eqref{lem:sum-to-integral:equ:1} and the fact that $f$ is monotone decreasing and nonnegative imply that
	\begin{align*}
		\sum_{a \,<\, n \,<\, b} f(n)
		&= \sum_{a \,<\, n \,\leq\, b} f(n)
		= \int_a^b f(t)\,\mathrm{d}t + \{a\}f(a) - \{b\}f(b) - \Theta\big(f(a) - f(b)\big) \\
		&= \int_a^b f(t)\,\mathrm{d}t \pm \Theta\big(f(a)\big) ,
	\end{align*}
	which is \eqref{lem:sum-to-integral:equ:2} again.
\end{proof}

\subsection{Logarithms}

The following lemma regards the monotonicity of a function involving the natural logarithm.

\begin{lemma}\label{lem:log-t-to-a-over-t}
	Let $a \geq 0$ be a real number.
	Then the function $x \mapsto (\log x)^a \!/x$ is monotone increasing over $[1, \neper^a]$ and monotone decreasing over ${[\neper^a, +\infty)}$.
\end{lemma}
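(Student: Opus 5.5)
The plan is a direct derivative computation on $(1,+\infty)$, together with a separate look at the endpoint $x = 1$ and at the degenerate case $a = 0$. Set $f(x) := (\log x)^a / x$ for $x \geq 1$.

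First, for $x > 1$ the function $\log x$ is positive, so $f$ is differentiable there. Its derivative is
\begin{equation*}
    f^\prime(x) = \frac{a(\log x)^{a-1} \cdot \frac1{x} \cdot x - (\log x)^a}{x^2} = \frac{(\log x)^{a-1}\,(a - \log x)}{x^2} .
\end{equation*}
The factor $(\log x)^{a-1}/x^2$ is strictly positive for $x > 1$. Hence the sign of $f^\prime(x)$ is the sign of $a - \log x$. This gives $f^\prime > 0$ on $(1, \neper^a)$ and $f^\prime < 0$ on $(\neper^a, +\infty)$. By the mean value theorem, $f$ is increasing on $(1, \neper^a]$ and decreasing on $[\neper^a, +\infty)$, with continuity at $\neper^a$ closing the intervals there.

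Second, the endpoint $x = 1$ has to be attached to the increasing interval. When $a > 0$, we have $f(1) = 0$ and $f$ is continuous at $1$ from the right, since $(\log x)^a \to 0$. Because $f(x) > 0$ for $x > 1$, monotonicity extends to the closed interval $[1, \neper^a]$. The only delicate point is that for $0 < a < 1$ the derivative may blow up as $x \to 1^+$. This is harmless, because continuity together with monotonicity on the open interval suffices.

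Finally, the case $a = 0$ must be handled separately. Here $\neper^a = 1$, so the first interval degenerates to the single point $\{1\}$ and the first claim is trivial. With the convention $(\log x)^0 = 1$ we get $f(x) = 1/x$, which is decreasing on $[1, +\infty)$, as required. I expect no real obstacle; the only care needed is with these boundary and degenerate cases.
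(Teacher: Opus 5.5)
Your proposal is correct and takes the same approach as the paper, whose entire proof is that the claim follows from the sign of the first derivative. Your computation $f^\prime(x) = (\log x)^{a-1}(a - \log x)/x^2$ and your handling of the endpoint $x = 1$ and the case $a = 0$ fill in details the paper leaves implicit.
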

\begin{proof}
	The claim follows easily by considering the sign of the first derivative.
\end{proof}

The next lemma is a technical inequality.

\begin{lemma}\label{lem:tricky-inequality}
	Let $x$ and $y$ be real numbers such that $x \geq 11.51$ and
	\begin{equation}\label{lem:tricky-inequality:equ:1}
		\frac{\log\!\big(0.99(x - \log x)\big)}{x} \leq y < 1 .
	\end{equation}
	Then
	\begin{equation}\label{lem:tricky-inequality:equ:2}
		-\log(1 - y) - y
		> \frac{-\log\!\big(1 - (\log x) / x\big)}{x} + \neper^{-x} .
	\end{equation}
\end{lemma}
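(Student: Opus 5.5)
Set $g(y) := -\log(1-y) - y$. Since $g'(y) = y/(1-y) > 0$ on $(0,1)$, $g$ is increasing there, and because the right-hand side of \eqref{lem:tricky-inequality:equ:2} does not depend on $y$, it suffices to prove the inequality at the smallest admissible value $y_0 := \log\!\big(0.99(x-\log x)\big)/x$. For $x \geq 11.51$ we have $0.99(x-\log x) > 1$, so $y_0 > 0$; we also have $y_0 < 1$, so $y_0$ lies in the range where $g$ is increasing. The lemma thus reduces to a one-variable inequality $F(x) > 0$ for $x \geq 11.51$, where
\begin{equation*}
F(x) := g(y_0(x)) + \frac{\log\!\big(1-(\log x)/x\big)}{x} - \neper^{-x} .
\end{equation*}

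For the asymptotic comparison, use $g(y) \geq y^2/2$, which gives the left side at least $\big(\log(0.99(x-\log x))\big)^2/(2x^2) \approx (\log x)^2/(2x^2)$. On the right, $-\log(1-u) \leq u/(1-u)$ with $u = (\log x)/x$ gives at most $\log x/(x(x-\log x)) + \neper^{-x}$, which is about $(\log x)/x^2$. The left side therefore wins as soon as roughly $(\log x)^2/2 > \log x$, i.e.\ $\log x > 2$. The threshold $11.51$, near $\log(10^5)$, presumably marks where the constant $0.99$ and the lower-order corrections are absorbed. Concretely, I would prove that
\begin{equation*}
\frac{\big(\log(0.99(x-\log x))\big)^2}{2} > \frac{x\log x}{x-\log x} + x^2 \neper^{-x}.
\end{equation*}
To do this, bound $\log(0.99(x-\log x)) \geq \log x + \log 0.99 + \log(1-(\log x)/x)$, and use that $(\log x)/x$ is decreasing for $x \geq \neper$ (Lemma~\ref{lem:log-t-to-a-over-t} with $a=1$). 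Then $\log(1-(\log x)/x) \geq \log(1-\log(11.51)/11.51)$, and similarly $x/(x-\log x) \leq 11.51/(11.51-\log 11.51)$. Finally, $x^2\neper^{-x}$ is decreasing for $x \geq 2$ (Lemma~\ref{lem:log-t-to-a-over-t} style, or by the derivative).

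After these substitutions, with $L := \log x \geq \log 11.51 \approx 2.443$, the inequality takes the form $(L - \delta)^2/2 > \kappa L + \varepsilon$, where $\delta, \kappa, \varepsilon$ are explicit constants. This quadratic in $L$ is increasing for $L > \delta + \kappa$, so it only needs to be checked at $L = \log 11.51$.

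The main obstacle is that the crude bound $g(y) \geq y^2/2$ may be too lossy near $x = 11.51$. Here $y_0 \approx \log(9.1)/11.51 \approx 0.19$, and the $y^2/2$ bound gives only about $0.0185$, against a right-hand side of about $0.0227 + 10^{-5}$. So this step likely fails and needs care. I would instead use $g(y) \geq y^2/2 + y^3/3 + y^4/4$, or simply keep $g(y_0)$ exact near the threshold. The plan is then: handle a compact interval $[11.51, X]$ by monotonicity plus a direct (interval-arithmetic) evaluation of $F$, and handle $[X,\infty)$ by the asymptotic argument above with $X$ large enough that the $y^2/2$ bound suffices. Since the paper's threshold $11.51$ appears tuned, I expect $F(11.51)$ to be barely positive, so the finite-range verification is the delicate part.
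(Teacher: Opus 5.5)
Your plan is sound but more computational than the paper's proof. Both arguments start the same way: the left side of \eqref{lem:tricky-inequality:equ:2} increases with $y$, so only the lower endpoint matters. Both then treat a tail with $-\log(1-y)-y\ge y^2/2$ and an initial stretch with something sharper.

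The paper never evaluates $g$ at the moving point $y_0(x)$. On each piece $[x_0,x_1)$ it uses the constant bound $y\ge \log(0.99(x_0-\log x_0))/x_1$. Because $g(y)/y^2=\sum_{k\ge 2}y^{k-2}/k$ is increasing, this gives $g(y)\ge a y^2$ with a \emph{constant} $a$. It keeps $-\log(1-(\log x)/x)$ exact. After multiplying by $x^2/\log x$, the left side is increasing and the right side decreasing in $x$, so one evaluation at $x=x_0$ settles each piece. With the pieces $[11.51,12)$, $[12,14)$, $[14,+\infty)$ (the last with $a=1/2$), the lemma costs three numerical checks.

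Your version needs care in two places.
\begin{enumerate}
\item Your tail bound $-\log(1-u)\le u/(1-u)$ is noticeably lossy. The inequality you display fails at $x=14$ and $x=16$ and holds only from roughly $x\approx 16.35$ on, so you need $X\approx 17$. Your freezing of $\delta,\kappa,\varepsilon$ at $X$ is then fine.
\item $F$ is a difference of two decreasing functions and is not monotone on $[11.51,+\infty)$. It rises from about $1.4\cdot 10^{-4}$ at $x=11.51$ to about $3\cdot 10^{-3}$ near $x=20$, then decays to $0$. So ``monotonicity'' must be applied separately to $G(x):=g(y_0(x))$ and $h(x):=-\log(1-(\log x)/x)/x+\neper^{-x}$. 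This gives $F\ge G(\beta)-h(\alpha)$ on each subinterval $[\alpha,\beta]$; alternatively, run interval arithmetic on subintervals. Since $|G'|\approx 1.8\cdot 10^{-3}$ near $11.51$, the subintervals there must be shorter than about $0.07$.
\end{enumerate}

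That is a rigorous certified computation, just heavier. The paper's per-piece freezing of $a$ is exactly what collapses this fine subdivision into three endpoint checks. Your route avoids that device at the cost of a brute-force verification on roughly $[11.51,17]$.

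A minor numerical point: at $x=11.51$ the right side is about $0.0207$, not $0.0227$, and $g(y_0)\approx 0.0209$. Your conclusion that $y^2/2\approx 0.0182$ is too weak there still stands.
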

\begin{proof}
	Let $x_0$ and $x_1$ be real numbers such that $x_1 \geq x_0 \geq \neper$, and define
	\begin{equation*}
		y_0 := \frac{\log\!\big(0.99 (x_0 - \log x_0)\big)}{x_1} , \quad
		a := \frac{-\log(1 - y_0) - y_0}{y_0^2} .
	\end{equation*}
	Furthermore, extend these definitions to $x_1 = +\infty$ by setting $y_0 := 0$ and $a := 1/2$.

	Suppose that $x \in {[x_0, x_1)}$.
	From \eqref{lem:tricky-inequality:equ:1} it follows that $y \geq y_0$, which in turn---using \eqref{lem:tricky-inequality:equ:1} again---implies that
	\begin{equation}\label{lem:tricky-inequality:equ:3}
		-\log(1 - y) - y
		\geq a y^2
		\geq a \!\left(\frac{\log\!\big(0.99(x - \log x)\big)}{x}\right)^{\!2} .
	\end{equation}
	At this point, note that the inequality
	\begin{equation}\label{lem:tricky-inequality:equ:4}
		a \!\left(\frac{\log\!\big(0.99(x - \log x)\big)}{x}\right)^{\!2}
		> \frac{-\log\!\big(1 - (\log x) / x\big)}{x} + \neper^{-x}
	\end{equation}
	is equivalent to
	\begin{equation}\label{lem:tricky-inequality:equ:5}
		a \!\left(1 - \frac{\log(1/0.99) - \log\!\big(1 - (\log x) / x\big)}{\log x}\right)^{\!2} \log x
		> \frac{-\log\!\big(1 - (\log x)/x\big)}{(\log x) / x} + \frac{x^2}{\neper^x \log x} .
	\end{equation}
	Furthermore, by Lemma~\ref{lem:log-t-to-a-over-t}, the left-hand, resp.\ right-hand, side of \eqref{lem:tricky-inequality:equ:5} is a function of $x$ that is increasing, resp.\ decreasing, over $[\neper, +\infty)$.
	Hence, if \eqref{lem:tricky-inequality:equ:5} holds for $x = x_0$ then it holds for all real numbers $x \in {[x_0, x_1)}$, and so does \eqref{lem:tricky-inequality:equ:4}.
	Then inequalities \eqref{lem:tricky-inequality:equ:3} and \eqref{lem:tricky-inequality:equ:4} imply that \eqref{lem:tricky-inequality:equ:2} is true for all real numbers $x \in {[x_0, x_1)}$.

	Finally, a computation shows that if $(x_0, x_1)$ is equal to $(11.51, 12)$, $(12, 14)$, or $(14, +\infty)$ then \eqref{lem:tricky-inequality:equ:5} holds for $x = x_0$.
	Therefore, inequality \eqref{lem:tricky-inequality:equ:2} is true for all real numbers $x \geq 11.51$, as desired.
\end{proof}

\subsection{Lambert W function}\label{subsec:lambert-w-function}

Let $W$ denote the principal branch of the Lambert W function.
Recall that $W$ is the unique function from ${[-1/\neper, +\infty)}$ to ${[-1, +\infty)}$ that satisfies
\begin{equation}\label{equ:lambert-w-definition}
	W(x) \neper^{W(x)} = x
\end{equation}
for every real number $x \geq -1/\neper$.
(For more on the Lambert W function, see the article by Corless, Gonnet, Hare, Jeffrey, and Knuth~\cite{MR1414285} and the book by Mez\H{o}~\cite{MR4600791}.)
Furthermore, define
\begin{align}\label{equ:omega-definition}
	\omega(x) &:= \neper^{W(x)}
\end{align}
for every real number $x \geq -1/\neper$.

The following lemma collects some properties of $\omega(x)$.

\begin{lemma}\label{lem:omega-properties}
	Let $x \geq \neper$ be a real number.
	Then
	\begin{align}
		\omega(x) \log \omega(x) &= x , \label{lem:omega-properties:equ:1} \\
		\omega(x \log x) &= x , \label{lem:omega-properties:equ:2} \\
		\omega(x) &\leq \frac{x}{\log x - \llog x} , \label{lem:omega-properties:equ:3} \\
		\log \omega(x) &= \log x - \llog x + \Theta\!\left(\!-\log\!\left(1 - \frac{\llog x}{\log x}\right)\!\right) . \label{lem:omega-properties:equ:4}
	\end{align}
\end{lemma}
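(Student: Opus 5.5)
The four properties will be derived in the order they are listed, each one from the previous ones, using only the defining relation \eqref{equ:lambert-w-definition} and the monotonicity of $u \mapsto u\log u$ on $[1,+\infty)$.

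For \eqref{lem:omega-properties:equ:1}, put $w := W(x)$. Then $\omega(x) = \neper^{w}$ and $\log \omega(x) = w$, so $\omega(x)\log\omega(x) = w\neper^{w} = x$ by \eqref{equ:lambert-w-definition}. Since $x \geq \neper$, we get $w \geq 1$: indeed $W$ is increasing and $W(\neper) = 1$. Hence $\omega(x) \geq \neper$.

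For \eqref{lem:omega-properties:equ:2}, note that $x \log x \geq \neper$ when $x \geq \neper$, so \eqref{lem:omega-properties:equ:1} applies with $x\log x$ in place of $x$. It gives $\omega(x\log x)\log\omega(x\log x) = x \log x$. The map $u \mapsto u\log u$ is strictly increasing on $[1,+\infty)$, so it is injective there, and therefore $\omega(x\log x) = x$.

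For \eqref{lem:omega-properties:equ:3}, write $\omega := \omega(x)$. Taking logarithms in \eqref{lem:omega-properties:equ:1} gives
\begin{equation*}
\log\omega + \log\log\omega = \log x ,
\end{equation*}
so $\log\omega = \log x - \llog\omega$. Since $\omega \leq x$ (because $\log\omega \geq 1$), we have $\llog\omega \leq \llog x$, and hence $\log\omega \geq \log x - \llog x$. Note that $\log x - \llog x > 0$ for $x \geq \neper$. Then
\begin{equation*}
\omega = \frac{x}{\log\omega} \leq \frac{x}{\log x - \llog x} .
\end{equation*}

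For \eqref{lem:omega-properties:equ:4}, the previous step already gives the lower bound $\log\omega \geq \log x - \llog x$, so the $\Theta$-term is nonnegative. For the upper bound, rewrite the identity as
\begin{equation*}
\log\omega = \log x - \llog x + \bigl(\llog x - \llog\omega\bigr) ,
\end{equation*}
where
\begin{equation*}
\llog x - \llog\omega = -\log\frac{\log\omega}{\log x} .
\end{equation*}
Because $\log\omega \geq \log x - \llog x$, we have $\frac{\log\omega}{\log x} \geq 1 - \frac{\llog x}{\log x}$. This quantity is positive, since $\llog x < \log x$. As $-\log$ is decreasing, the extra term is at most $-\log\bigl(1 - \llog x/\log x\bigr)$, which is exactly the claimed $\Theta$-bound.

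There is no real obstacle in this argument. The only care needed is checking the domains: $\omega(x) \geq \neper$ is needed for injectivity in \eqref{lem:omega-properties:equ:2}, and positivity of $\log x - \llog x$ is needed in \eqref{lem:omega-properties:equ:3} and \eqref{lem:omega-properties:equ:4}. Both follow from $x \geq \neper$.
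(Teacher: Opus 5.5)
Your proof is correct and is essentially the paper's argument written in terms of $\omega(x)$ rather than $W(x)=\log\omega(x)$. The paper likewise uses $W(x)=\log x-\log W(x)$ and $1\le W(x)\le\log x$ to get $W(x)\ge\log x-\llog x$, and then substitutes this back to obtain the upper bound. Your explicit injectivity argument for \eqref{lem:omega-properties:equ:2} simply spells out what the paper treats as immediate.
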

\begin{proof}
	Equations \eqref{lem:omega-properties:equ:1} and \eqref{lem:omega-properties:equ:2} follow quickly from \eqref{equ:lambert-w-definition} and \eqref{equ:omega-definition}.
	By taking the logarithms of both sides of \eqref{equ:lambert-w-definition} and rearranging, it follows that
	\begin{equation}\label{lem:omega-properties:equ:5}
		W(x) = \log x - \log W(x) .
	\end{equation}
	From $x \geq \neper$ and \eqref{equ:lambert-w-definition} it follows that $W(x) \geq 1$, which in tandem with \eqref{lem:omega-properties:equ:5} implies that
	\begin{equation}\label{lem:omega-properties:equ:6}
		W(x) \leq \log x .
	\end{equation}
	In turn, from \eqref{lem:omega-properties:equ:5} and \eqref{lem:omega-properties:equ:6} it follows that
	\begin{equation}\label{lem:omega-properties:equ:7}
		W(x) \geq \log x - \llog x .
	\end{equation}
	Then \eqref{equ:omega-definition}, \eqref{equ:lambert-w-definition}, and \eqref{lem:omega-properties:equ:7} imply that
	\begin{equation*}
		\omega(x)
		= \neper^{W(x)}
		= \frac{x}{W(x)}
		\leq \frac{x}{\log x - \llog x} ,
	\end{equation*}
	which is \eqref{lem:omega-properties:equ:3}.
	Moreover, from \eqref{lem:omega-properties:equ:5} and \eqref{lem:omega-properties:equ:7} it follows that
	\begin{equation}\label{lem:omega-properties:equ:8}
		W(x) \leq \log x - \log\!\big(\!\log x - \llog x\big)
		= \log x - \llog x - \log\!\left(1 - \frac{\llog x}{\log x}\right) .
	\end{equation}
	Finally, combining \eqref{equ:omega-definition}, \eqref{lem:omega-properties:equ:7}, and \eqref{lem:omega-properties:equ:8}  yields \eqref{lem:omega-properties:equ:4}.
\end{proof}

The next lemma provides an estimate for a sum involving $\omega(n)$.

\begin{lemma}\label{lem:omega-sum}
	Let $c_1 \in [0.99, 1]$, let $n_2 \geq 10^5$ be an integer, and define
	\begin{equation}\label{equ:constant-c_3}
		c_3 := \frac{-\log\!\big(1 - (\llog n_2)/\!\log n_2\big)- (\llog n_2)/\!\log n_2}{\big((\llog n_2)/\!\log n_2\big)^2} + \frac{(\log n_2)^2}{n_2 (\llog n_2)^2}
	\end{equation}
	Then
	\begin{align}\label{lem:omega-sum:equ:1}
		\sum_{\omega(n) / c_1 \,<\, k \,<\, n} \frac1{k \log k}
		= \frac{\llog n - \log(1/c_1)}{\log n} + \Theta\!\left(\!c_3\!\left(\frac{\llog n}{\log n}\right)^{\!2}\right) .
	\end{align}
	for all integers $n \geq n_2$.
\end{lemma}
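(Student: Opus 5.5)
The plan is to compare the sum with an integral using Lemma~\ref{lem:sum-to-integral}, with $f(t) := 1/(t\log t)$, $a := \omega(n)/c_1$ and $b := n$. Then to express the integral exactly through $\log\omega(n)$, and to control it with Lemma~\ref{lem:omega-properties}. The lower bound will come from Lemma~\ref{lem:tricky-inequality}, and the upper bound from a Taylor-type estimate of $-\log(1-y)$.

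\textbf{Setting up.} Put $x := \log n$ and $u := (\llog n)/\log n$, and write
\begin{equation*}
\log\omega(n) = \log n - \llog n + \delta, \qquad 0 \le \delta \le -\log(1-u),
\end{equation*}
which is \eqref{lem:omega-properties:equ:4}. The function $f$ is decreasing and positive on $[a,n]$, because $a > \neper$ for $n \ge 10^5$. The integral is
\begin{equation*}
\llog n - \llog(\omega(n)/c_1) = -\log(1-y),
\end{equation*}
where
\begin{equation*}
y := 1 - \frac{\log(\omega(n)/c_1)}{\log n} = \frac{\llog n - \log(1/c_1) - \delta}{\log n}.
\end{equation*}
Let $y' := (\llog n - \log(1/c_1))/\log n$ be the target main term, so that $y = y' - \delta/\log n \le y'$. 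For the boundary error, $\omega(n)\log\omega(n) = n$ and $c_1 \le 1$ give
\begin{equation*}
f(a) = \frac{c_1}{\omega(n)\log(\omega(n)/c_1)} \le \frac{1}{\omega(n)\log\omega(n)} = \frac1n = \neper^{-x}.
\end{equation*}
Lemma~\ref{lem:sum-to-integral} therefore places the sum in $[-\log(1-y) - 1/n,\; -\log(1-y) + 1/n]$.

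\textbf{Lower bound.} We need $-\log(1-y) - 1/n \ge y'$. Since $y' = y + \delta/\log n$, this amounts to
\begin{equation*}
-\log(1-y) - y \ge \frac{\delta}{x} + \neper^{-x}.
\end{equation*}
Because $\delta/x \le -\log(1 - (\log x)/x)/x$, it is exactly the conclusion of Lemma~\ref{lem:tricky-inequality}. The hypotheses of that lemma need checking, and this is the delicate step.
\begin{itemize}
\item $x = \log n \ge \log 10^5 > 11.51$ holds by assumption on $n_2$.
\item $y < 1$ holds since $\omega(n)/c_1 > 1$.
\item $y \ge \log(0.99(x - \log x))/x$ follows from \eqref{lem:omega-properties:equ:3} together with $c_1 \ge 0.99$:
\begin{equation*}
\log(\omega(n)/c_1) \le x - \log(x - \log x) + \log(1/0.99),
\end{equation*}
and this rearranges to the required inequality.
\end{itemize}

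\textbf{Upper bound.} Since $-\log(1-\cdot)$ is increasing and $y \le y' \le u$, the sum is at most
\begin{equation*}
-\log(1-y') + \frac1n \le y' + g(u)\,u^2 + \frac1n, \qquad g(v) := \frac{-\log(1-v) - v}{v^2}.
\end{equation*}
Here the function $g$ has nonnegative power series coefficients, so it is increasing in $v$. Because $u$ is decreasing in $n$ for $n \ge n_2$ (Lemma~\ref{lem:log-t-to-a-over-t}), we get $g(u) \le g(u(n_2))$, which is the first term of $c_3$.

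For the last error term, write
\begin{equation*}
\frac1n = u^2 \cdot \frac{(\log n)^2}{n(\llog n)^2}.
\end{equation*}
The factor $(\log n)^2/n$ is decreasing on $[\neper^2,\infty)$ by Lemma~\ref{lem:log-t-to-a-over-t}, and $(\llog n)^2$ is increasing. Hence this factor is at most its value at $n_2$, which is the second term of $c_3$. Together the two bounds give the $\Theta\big(c_3 u^2\big)$ error, i.e. \eqref{lem:omega-sum:equ:1}.

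The main obstacle is the lower bound, which is where the cancellation between $\delta/\log n$, $1/n$ and the quadratic gain $-\log(1-y) - y$ happens. All the numerical difficulty there has been packaged into Lemma~\ref{lem:tricky-inequality}, so the remaining work is the verification of its hypothesis on $y$.
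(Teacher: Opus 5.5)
Your proposal is correct and follows essentially the same route as the paper. You use Lemma~\ref{lem:sum-to-integral} with boundary error $1/n$, write the integral exactly as $-\log(1-y)$ and control it through \eqref{lem:omega-properties:equ:3}--\eqref{lem:omega-properties:equ:4}, get the lower bound from Lemma~\ref{lem:tricky-inequality}, and get the upper bound from the monotonicity of $(-\log(1-v)-v)/v^2$ and of $(\log n)^2/(n(\llog n)^2)$. Replacing $y$ by $y'$ through the monotonicity of $-\log(1-\cdot)$, rather than discarding the $-\delta/\log n$ term as the paper does, is an inessential variation. The one detail you should state explicitly is $\omega(n)/c_1 < n$, which Lemma~\ref{lem:sum-to-integral} requires; it follows from your bound $y \ge \log(0.99(x-\log x))/x > 0$, and the paper checks it in \eqref{lem:omega-sum:equ:3}.
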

\begin{proof}
	Let $n \geq n_2$ be an integer.
	Since $c_1 \in [0.99, 1]$ and $n_2 \geq 10^5$, it follows that
	\begin{equation}\label{lem:omega-sum:equ:1-and-a-half}
		\frac{\omega(n)}{c_1} \geq \omega(n) \geq \omega\big(10^5\big) = 10770.5... > 2
	\end{equation}
	and
	\begin{equation}\label{lem:omega-sum:equ:2}
		c_1 (\log n - \llog n) \geq 0.99 \big(\!\log 10^5 - \llog 10^5\big) = 8.9... > 1 .
	\end{equation}
	Then \eqref{lem:omega-sum:equ:2} and Eq.\ \eqref{lem:omega-properties:equ:3} of Lemma~\ref{lem:omega-properties} imply that
	\begin{equation}\label{lem:omega-sum:equ:3}
		\frac{\omega(n)}{c_1}
		\leq \frac{n}{c_1(\log n - \llog n)}
		< n .
	\end{equation}
	Moreover, since the function $x \mapsto x \log x$ is monotone increasing over $[2, +\infty)$, from \eqref{lem:omega-sum:equ:1-and-a-half} and Eq.\ \eqref{lem:omega-properties:equ:1} of Lemma~\ref{lem:omega-properties} it follows that
	\begin{equation}\label{lem:omega-sum:equ:4}
		\frac{\omega(n)}{c_1} \log\!\left(\frac{\omega(n)}{c_1}\right)
		\geq \omega(n) \log \omega(n)
		= n .
	\end{equation}
	Note that the function $x \mapsto 1/(x \log x)$ is nonnegative and monotone decreasing with continuous derivative over ${[2, +\infty)}$.
	Hence, thanks to \eqref{lem:omega-sum:equ:1-and-a-half} and \eqref{lem:omega-sum:equ:3}, using Lemma~\ref{lem:sum-to-integral} it is possible to estimate the sum in \eqref{lem:omega-sum:equ:1} with the corresponding integral.
	More precisely, using \eqref{lem:omega-sum:equ:4} to bound the $\Theta$-term in \eqref{lem:sum-to-integral:equ:2}, Lemma~\ref{lem:sum-to-integral} implies that
	\begin{equation}\label{lem:omega-sum:equ:5}
		\sum_{\omega(n) / c_1 \,<\, k \,<\, n} \frac1{k \log k}
		= \int_{\omega(n) / c_1}^n \frac{\mathrm{d}t}{t \log t} \pm \Theta\!\left(\frac1{n}\right) .
	\end{equation}
	Computing the integral in \eqref{lem:omega-sum:equ:5} and employing Eq.\ \eqref{lem:omega-properties:equ:4} of Lemma~\ref{lem:omega-properties} gives
	\begin{equation}\label{lem:omega-sum:equ:6}
		\int_{\omega(n) / c_1}^n \frac{\mathrm{d}t}{t \log t}
		= \big[\!\llog t\big]_{t \,=\, \omega(n) / c_1}^n
		= -\log\!\left(\frac{\log\!\big(\omega(n) / c_1\big)}{\log n}\right)
		= -\log(1 - y) ,
	\end{equation}
	where
	\begin{equation}\label{lem:omega-sum:equ:7}
		y := \frac{\llog n - \log(1/c_1) - \Theta\!\left(-\log\!\big(1 - (\llog n)/\!\log n\big)\right)}{\log n} .
	\end{equation}
	From \eqref{lem:omega-sum:equ:7} and $c_1 \leq 1$ it follows that
	\begin{equation}\label{lem:omega-sum:equ:8}
		\frac{\log\!\big(c_1 (\log n - \llog n)\big)}{\log n}
		\leq y
		\leq \frac{\llog n}{\log n} .
	\end{equation}
	In particular, note that $y > 0$ by \eqref{lem:omega-sum:equ:2} and the first inequality in \eqref{lem:omega-sum:equ:8}.
	Putting together \eqref{lem:omega-sum:equ:5}, \eqref{lem:omega-sum:equ:6}, and \eqref{lem:omega-sum:equ:7} yields that
	\begin{equation}\label{lem:omega-sum:equ:9}
		\sum_{\omega(n) / c_1 \,<\, k \,<\, n} \frac1{k \log k}
		= \frac{\llog n - \log(1/c_1)}{\log n} + E
	\end{equation}
	where
	\begin{equation*}
		E := -\log(1 - y) - y
		- \frac{\Theta\!\left(-\log\!\big(1 - (\llog n)/\!\log n\big)\right)}{\log n} \pm \Theta\!\left(\frac1{n}\right) .
	\end{equation*}
	It remains only to estimate the error term $E$.
	On the one hand, since $c_1 \geq 0.99$ and
    \begin{equation*}
        \log n \geq \log n_2 \geq \log 10^5 > 11.51 ,
    \end{equation*}
    the first inequality in \eqref{lem:omega-sum:equ:8} and Lemma~\ref{lem:tricky-inequality} imply that $E \geq 0$.
	On the other hand, since $n \geq n_2 > \neper^\neper$, from the second inequality in \eqref{lem:omega-sum:equ:8} and Lemma~\ref{lem:log-t-to-a-over-t} it follows that
	\begin{equation*}
		E
		\leq -\log(1 - y) - y + \frac1{n}
		\leq \left(\frac{-\log(1 - y) - y}{y^2} + \frac{(\log n)^2}{n (\llog n)^2}\right)\!\left(\frac{\llog n}{\log n}\right)^{\!2}
		\leq c_3 \!\left(\frac{\llog n}{\log n}\right)^{\!2} .
	\end{equation*}
	Hence
	\begin{equation}\label{lem:omega-sum:equ:10}
		E = \Theta\!\left(\!c_3\!\left(\frac{\llog n}{\log n}\right)^{\!2}\right) .
	\end{equation}
	Finally, combining \eqref{lem:omega-sum:equ:9} and \eqref{lem:omega-sum:equ:10} yields \eqref{lem:omega-sum:equ:1}, as desired.
\end{proof}


\section{Upper and lower bounds on \texorpdfstring{$\tau_n$}{tau\_n}}

This section is devoted to the proof of some upper and lower bounds on $\tau_n$.
For all real numbers $x, y \geq 1$, define
\begin{equation}\label{equ:xi-definition}
	\xi_{x, y} := \sum_{x \,<\, i \,<\, y} \frac1{\ell_i} .
\end{equation}
The next lemma provides an estimate of $\tau_n$ in terms of $\xi_{x,y}$.

\begin{lemma}\label{lem:tau-xi-estimate}
	Let $x \geq 1$ be a real number and let $n \geq 1$ be an integer.
	Suppose that $\ell_{\lfloor x \rfloor + 1} > n$.
	Then
	\begin{equation}\label{lem:tau-xi-estimate:equ:1}
		\tau_n
		= \xi_{x, n} - \Theta\big(\tfrac1{2}\xi_{x, n}^2\big) + \Theta(x / n) .
	\end{equation}
\end{lemma}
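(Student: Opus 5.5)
The plan is to split the sum defining $\tau_n = \tau_{n,n}$ at the threshold $x$. The indices $i \le x$ will give the error $\Theta(x/n)$. For the indices $x < i < n$, the fractional parts can be computed exactly, and the resulting expression is the one handled by Lemma~\ref{lem:prod-vs-sum}.

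Write
\begin{equation*}
\tau_n = \frac1{n}\sum_{1 \le i \le \min(x, n-1)} \frac{\rho_{i+1}}{\rho_n}\left\{\frac{L_i(\ell_n)}{\ell_i}\right\} + \frac1{n}\sum_{x < i < n} \frac{\rho_{i+1}}{\rho_n}\left\{\frac{L_i(\ell_n)}{\ell_i}\right\}.
\end{equation*}
For every $i \le n-1$, the definition of $\rho_m$ gives
\begin{equation*}
\frac{\rho_{i+1}}{\rho_n} = \prod_{j=i+1}^{n-1}\left(1 - \frac1{\ell_j}\right) \in (0,1].
\end{equation*}
Each fractional part lies in $[0,1)$, and the first sum has at most $\lfloor x \rfloor \le x$ terms. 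Hence the first piece is $\Theta(x/n)$. This also covers the degenerate case $x \ge n-1$: there the second sum is empty, $\xi_{x,n} = 0$, and the identity holds trivially.

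For the second piece, fix an integer $i$ with $x < i < n$, so that $i \ge \lfloor x \rfloor + 1 \ge 2$. Since the lucky numbers are increasing, $\ell_i \ge \ell_{\lfloor x \rfloor + 1} > n$. Lemma~\ref{lem:l_n-equal-l_mn}, applied with $m = i$, then gives $\ell_n = \ell_{i,n}$, so $L_i(\ell_n) = L_i(\ell_{i,n}) = n$. Because $0 < n/\ell_i < 1$, the fractional part is simply $n/\ell_i$. The factor $1/n$ cancels, and the second piece becomes
\begin{equation*}
\sum_{x<i<n} \frac1{\ell_i}\prod_{j=i+1}^{n-1}\left(1-\frac1{\ell_j}\right).
\end{equation*}
Each $1/\ell_j$ with $x < j < n$ lies in $(0,1)$, since $\ell_j > n \ge 1$.

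Reindex the values $1/\ell_i$ for $x < i < n$ as $x_1, \dots, x_k$ in increasing order of $i$. The second piece is then exactly the left-hand side of \eqref{lem:prod-vs-sum:equ:1}, because the product only involves indices $j > i$, all of which lie in $(x, n)$. Lemma~\ref{lem:prod-vs-sum} therefore gives $\xi_{x,n} - \Theta\big(\tfrac12 \xi_{x,n}^2\big)$. Adding the two pieces yields \eqref{lem:tau-xi-estimate:equ:1}.

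The only step needing care is justifying $L_i(\ell_n) = n$ for every $i > x$. This rests on the monotonicity $\ell_i \ge \ell_{\lfloor x\rfloor+1}$ together with the hypothesis $m \ge 2$ of Lemma~\ref{lem:l_n-equal-l_mn}, which is satisfied because $i > x \ge 1$. Everything else is bookkeeping.
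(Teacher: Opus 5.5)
Your proposal is correct and follows the paper's proof essentially step for step. You split the sum at $x$, use Lemma~\ref{lem:l_n-equal-l_mn} with $m = i$ to get $\{L_i(\ell_n)/\ell_i\} = n/\ell_i$, write $\rho_{i+1}/\rho_n$ as a product and apply Lemma~\ref{lem:prod-vs-sum}, and bound the terms with $i \le x$ trivially by $x/n$. Your explicit checks that $i \ge 2$ and that the case $x \ge n-1$ degenerates harmlessly are points the paper leaves implicit.
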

\begin{proof}
	Let $i > x$ be an integer.
	Then $i \geq \lfloor x \rfloor + 1$ and so $\ell_i \geq \ell_{\lfloor x \rfloor + 1} > n$.
	From Lemma~\ref{lem:l_n-equal-l_mn} it follows that $\ell_n = \ell_{i, n}$. Consequently,
	\begin{equation*}
		L_i(\ell_n) = L_i(\ell_{i, n}) = n < \ell_i ,
	\end{equation*}
	which in turn implies that $\big\{L_i(\ell_n) / \ell_i \big\} = n / \ell_i$.
	Therefore, from Lemma~\ref{lem:prod-vs-sum} and \eqref{equ:xi-definition} it follows that
	\begin{equation}\label{lem:tau-xi-estimate:equ:2}
		\frac1{n} \sum_{x \,<\, i \,<\, n} \frac{\rho_{i+1}}{\rho_n} \left\{\frac{L_i(\ell_n)}{\ell_i}\right\}
		= \sum_{x \,<\, i \,<\, n} \frac1{\ell_i} \prod_{j \,=\, i + 1}^{n - 1} \left(1 - \frac1{\ell_j}\right)
		= \xi_{x, n} - \Theta\big(\tfrac1{2}\xi_{x, n}^2\big) .
	\end{equation}
	Moreover, it is clear that
	\begin{equation}\label{lem:tau-xi-estimate:equ:3}
		0 \leq
		\frac1{n} \sum_{1 \,\leq\, i \,\leq\, x} \frac{\rho_{i+1}}{\rho_n} \left\{\frac{L_i(\ell_n)}{\ell_i}\right\}
		\leq \frac1{n} \sum_{1 \,\leq\, i \,\leq\, x} 1
		\leq \frac{x}{n} .
	\end{equation}
	Combining \eqref{lem:tau-xi-estimate:equ:2} and \eqref{lem:tau-xi-estimate:equ:3} yields \eqref{lem:tau-xi-estimate:equ:1}.
\end{proof}

The following lemma provides an upper bound on $\tau_n$.

\begin{lemma}\label{lem:tau_n-upper-bound}
	Let $c_1 \in [0.99, 1]$, let $n_1 \geq 10771$ be an integer, define
	\begin{align*}
		n_2 &:= \lceil n_1 \log n_1 \rceil , \\
		c_2 &:= -\log(1 / c_1) + \big(1 - (\llog n_1)/\! \log n_1\big)^{-1} ,
	\end{align*}
    and let $c_3$ be given by \eqref{equ:constant-c_3}.
	Suppose that \eqref{equ:ell_n-first-lower-bound} holds for all integers $n \geq n_1$.
	Then \eqref{equ:tau_n-upper-bound} holds for all integers $n \geq n_2$.
\end{lemma}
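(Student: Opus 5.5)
Fix an integer $n \geq n_2$ and apply Lemma~\ref{lem:tau-xi-estimate} with the cutoff $x := \omega(n)/c_1$. This is the natural cutoff: every $i > x$ should satisfy $\ell_i > c_1 i \log i > n$. To make this rigorous, first check that $\lfloor x \rfloor + 1 \geq n_1$. Since $n \geq n_2 \geq n_1 \log n_1$ and $\omega$ is increasing, \eqref{lem:omega-properties:equ:2} gives $\omega(n) \geq \omega(n_1\log n_1) = n_1$, so $x \geq n_1$. Next, put $j := \lfloor x \rfloor + 1 > x$. Then \eqref{equ:ell_n-first-lower-bound} yields
\begin{equation*}
\ell_j > c_1 j \log j > c_1 x \log x \geq \omega(n)\log\omega(n) = n,
\end{equation*}
where the last step is \eqref{lem:omega-sum:equ:4} combined with \eqref{lem:omega-properties:equ:1}. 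So the hypothesis of Lemma~\ref{lem:tau-xi-estimate} holds, and dropping the nonpositive $\Theta(\tfrac12\xi^2)$ term gives
\begin{equation*}
\tau_n \leq \xi_{x,n} + \frac{x}{n}.
\end{equation*}

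Now bound each piece.
\begin{itemize}
\item For $\xi_{x,n}$: every index $i > x \geq n_1$ satisfies $1/\ell_i < 1/(c_1 i\log i)$, so
\begin{equation*}
\xi_{x,n} < \frac1{c_1}\sum_{\omega(n)/c_1 < k < n}\frac1{k\log k}.
\end{equation*}
Lemma~\ref{lem:omega-sum} applies, since $c_1\in[0.99,1]$ and $n_2 \geq 10771\log 10771 > 10^5$. It bounds this sum by $\bigl(\llog n - \log(1/c_1)\bigr)/\log n + c_3(\llog n/\log n)^2$.
\item For $x/n$: by \eqref{lem:omega-properties:equ:3},
\begin{equation*}
\frac{x}{n} = \frac{\omega(n)}{c_1 n} \leq \frac{1}{c_1(\log n - \llog n)} = \frac{1}{c_1\log n}\Bigl(1 - \frac{\llog n}{\log n}\Bigr)^{-1}.
\end{equation*}
The function $\llog t/\log t$ is decreasing for $t \geq \neper^{\neper}$ by Lemma~\ref{lem:log-t-to-a-over-t}, and $n \geq n_1 > \neper^{\neper}$. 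Hence $\bigl(1-\llog n/\log n\bigr)^{-1} \leq \bigl(1-\llog n_1/\log n_1\bigr)^{-1}$.
\end{itemize}

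Adding the two bounds and factoring out $1/c_1$ gives
\begin{equation*}
\tau_n < \frac1{c_1}\left(\frac{\llog n - \log(1/c_1) + \bigl(1-\llog n_1/\log n_1\bigr)^{-1}}{\log n} + c_3\Bigl(\frac{\llog n}{\log n}\Bigr)^{2}\right).
\end{equation*}
The numerator of the first fraction is exactly $\llog n + c_2$, so this is \eqref{equ:tau_n-upper-bound}.

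The main obstacle is the first step: verifying that the cutoff $\omega(n)/c_1$ is admissible. This means confirming both that it is at least $n_1$, so that \eqref{equ:ell_n-first-lower-bound} can be used, and that it forces $\ell_{\lfloor x\rfloor+1} > n$. Both facts depend on the identities of Lemma~\ref{lem:omega-properties} and on the monotonicity of $t\log t$. The remaining steps are bookkeeping.
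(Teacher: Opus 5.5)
Your proof is correct and follows the paper's argument essentially step for step. It uses the same cutoff $x=\omega(n)/c_1$, the same check that $\ell_{\lfloor x\rfloor+1}>n$ to invoke Lemma~\ref{lem:tau-xi-estimate}, Lemma~\ref{lem:omega-sum} for $\xi_{x,n}$, and \eqref{lem:omega-properties:equ:3} together with the monotonicity of $\llog t/\log t$ for $x/n$. One small citation slip: \eqref{lem:omega-sum:equ:4} only gives $x\log x\ge n$, but you need $c_1x\log x\ge n$, which follows instead from $c_1x\log x=\omega(n)\log(\omega(n)/c_1)\ge\omega(n)\log\omega(n)=n$ because $c_1\le 1$.
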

\begin{proof}
	Let $n \geq n_2$ be an integer and put $x := \omega(n) / c_1$.
	From $c_1 \in [0.99, 1]$, $n \geq n_2$, and Eq.~\eqref{lem:omega-properties:equ:2} of Lemma~\ref{lem:omega-properties} it follows that
	\begin{equation}\label{lem:tau_n-upper-bound:equ:3}
		x
		> \omega(n)
		\geq \omega(n_2)
		\geq \omega(n_1 \log n_1)
		= n_1 .
	\end{equation}
	From \eqref{lem:tau_n-upper-bound:equ:3}, \eqref{equ:ell_n-first-lower-bound}, and Eq.\ \eqref{lem:omega-properties:equ:1} of Lemma~\ref{lem:omega-properties} it follows that
	\begin{equation}\label{lem:tau_n-upper-bound:equ:4}
		\ell_{\lfloor x \rfloor + 1}
		> c_1 x \log x
		= \omega(n) \log\!\left(\frac{\omega(n)}{c_1}\right)
		\geq \omega(n) \log \omega(n)
		= n .
	\end{equation}
	Hence \eqref{lem:tau_n-upper-bound:equ:4} and Lemma~\ref{lem:tau-xi-estimate} imply estimate \eqref{lem:tau-xi-estimate:equ:1}, which in turn gives
	\begin{equation}\label{lem:tau_n-upper-bound:equ:5}
		\tau_n
		\geq \xi_{x, n} - \tfrac1{2}\xi_{x, n}^2 .
	\end{equation}
	Furthermore, from \eqref{lem:tau-xi-estimate:equ:1}, Eq.\ \eqref{lem:omega-properties:equ:3} of Lemma~\ref{lem:omega-properties}, and Lemma~\ref{lem:log-t-to-a-over-t} (since $n_2 > \neper^\neper$) it follows that
	\begin{equation}\label{lem:tau_n-upper-bound:equ:6}
		\tau_n
		\leq \xi_{x, n} + \frac{x}{n}
		\leq \xi_{x, n} + \frac1{c_1 (\log n - \llog n)}
		\leq \xi_{x, n} + \frac1{c_1}\!\left(1 - \frac{\llog n_1}{\log n_1}\right)^{-1}\!\!\frac1{\log n} .
	\end{equation}
	Since $c_1 \in [0.99, 1]$ and
	\begin{equation}\label{lem:tau_n-upper-bound:equ:6-and-a-half}
		n_2 \geq n_1 \log n_1 \geq 10771 \log 10771 > 10^5 ,
	\end{equation}
    Lemma~\ref{lem:omega-sum} implies that
    \begin{equation}\label{lem:tau_n-upper-bound:equ:6-and-three-quarters}
        \sum_{x \,<\, k \,<\, n} \frac1{k \log k}
        = \frac{\llog n - \log(1/c_1)}{\log n} + \Theta\!\left(\!c_3\!\left(\frac{\llog n}{\log n}\right)^{\!2}\right) .
    \end{equation}
    From \eqref{lem:tau_n-upper-bound:equ:3}, \eqref{equ:ell_n-first-lower-bound}, \eqref{equ:xi-definition}, and \eqref{lem:tau_n-upper-bound:equ:6-and-three-quarters} it follows that
	\begin{equation}\label{lem:tau_n-upper-bound:equ:7}
		\xi_{x, n}
		\leq \frac1{c_1} \sum_{x \,<\, k \,<\, n} \frac1{k \log k}
		\leq \frac1{c_1}\!\left(\frac{\llog n - \log(1/c_1)}{\log n} + c_3\!\left(\frac{\llog n}{\log n}\right)^{\!2} \right)
	\end{equation}
	Finally, combining \eqref{lem:tau_n-upper-bound:equ:6} and \eqref{lem:tau_n-upper-bound:equ:7} yields \eqref{equ:tau_n-upper-bound}, as desired.
\end{proof}

The next lemma provides a lower bound on $\tau_n$.

\begin{lemma}\label{lem:tau_n-lower-bound}
	Let $c_1 \in [0.99, 1]$, $c_2, c_4 > 0$, let $n_2 \geq 10771$ be an integer, define
	\begin{align*}
		n_3 &:= \lceil n_2 \log n_2 \rceil , \\
		c_5 &:= \log(1/c_1), \\
		c_6 &:= \frac1{c_1} \!\left(\frac1{2} + \frac{c_2}{\llog n_3} + \frac{c_4}{(\llog n_3)^2}\right)\!\left(1 - \frac{\llog n_3}{\log n_3}\right)^{-1} \\
		&\hspace{2em}+ \frac1{2c_1^2}\!\left(\frac1{\llog n_3} + \frac{2c_3}{\log n_3} + \frac{c_3^2 \llog n_3}{(\log n_3)^2}\right) ,
	\end{align*}
    and let $c_3$ be given by \eqref{equ:constant-c_3} but with $n_3$ in place of $n_2$.
	Suppose that \eqref{equ:ell_n-first-lower-bound} and \eqref{equ:ell_n-upper-bound} hold for all integers $n \geq n_2$.
	Then \eqref{equ:tau_n-lower-bound} holds for all integers $n \geq n_3$.
\end{lemma}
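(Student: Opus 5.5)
Fix an integer $n \geq n_3$ and mirror the proof of Lemma~\ref{lem:tau_n-upper-bound}, but now use the upper bound \eqref{equ:ell_n-upper-bound} on $\ell_i$ to bound $\xi_{x,n}$ from below. The cutoff stays $x := \omega(n)/c_1$. As in \eqref{lem:tau_n-upper-bound:equ:3}--\eqref{lem:tau_n-upper-bound:equ:4}, since $n \geq n_3 = \lceil n_2 \log n_2\rceil$, we get $x > \omega(n) \geq \omega(n_2\log n_2) = n_2$. So \eqref{equ:ell_n-first-lower-bound} holds at index $\lfloor x\rfloor+1$, and this gives $\ell_{\lfloor x\rfloor+1} > n$. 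Lemma~\ref{lem:tau-xi-estimate} then yields $\tau_n \geq \xi_{x,n} - \tfrac12\xi_{x,n}^2$, because the $\Theta(x/n)$ term is nonnegative. It therefore suffices to bound $\xi_{x,n}$ from below and $\xi_{x,n}^2$ from above.

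For the quadratic term, reuse \eqref{lem:tau_n-upper-bound:equ:7}, which only needs \eqref{equ:ell_n-first-lower-bound} for indices $>x>n_2$, together with Lemma~\ref{lem:omega-sum} applied with $n_3$ as the threshold (so $c_3$ is computed at $n_3$). Since $c_5 = \log(1/c_1) \geq 0$, this gives $\xi_{x,n} \leq \tfrac1{c_1}\big(\frac{\llog n}{\log n} + c_3(\frac{\llog n}{\log n})^2\big)$. Squaring and factoring out $(\llog n)^3/(\log n)^2$ gives
\[
\tfrac12\xi_{x,n}^2 \leq \frac{1}{2c_1^2}\Big(\frac1{\llog n} + \frac{2c_3}{\log n} + \frac{c_3^2\llog n}{(\log n)^2}\Big)\frac{(\llog n)^3}{(\log n)^2}.
\]
Each bracketed term is decreasing in $n$ for $n \geq n_3$ (by Lemma~\ref{lem:log-t-to-a-over-t} for the last one, since $n_3 > \neper^{\neper}$). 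Hence the bracket can be evaluated at $n_3$, which produces the second part of $c_6$.

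For the linear term, bound each $1/\ell_k$ for $x<k<n$ using \eqref{equ:ell_n-upper-bound}, valid because $k > n_2$:
\[
\frac1{\ell_k} > \frac1{k\log k}\Big(1 + \frac{\tfrac12(\llog k)^2 + c_2\llog k + c_4}{c_1\log k}\Big)^{-1} \geq \frac1{k\log k} - \frac{\tfrac12(\llog k)^2 + c_2\llog k + c_4}{c_1\,k(\log k)^2}.
\]
Summing the main part with Lemma~\ref{lem:omega-sum} (lower side of the $\Theta$) gives $\frac{\llog n - c_5}{\log n}$. For the error sum, the factor $\tfrac12(\llog k)^2+c_2\llog k+c_4 = (\llog k)^2\big(\tfrac12 + \frac{c_2}{\llog k} + \frac{c_4}{(\llog k)^2}\big)$ has its bracket decreasing in $k$. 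Moreover $k > x > n_2$ is large, so the bracket can be bounded by its value at $n_3$ (or a comparable threshold), and what remains is $\sum_{x<k<n} 1/(k\log k)$ times at most $(\llog n)^2/\log x$. Here one uses $\log k \geq \log x \geq \log\omega(n) \geq \log n - \llog n$ from \eqref{lem:omega-properties:equ:4}, so $\frac1{\log k} \leq \frac1{\log n}\big(1-\frac{\llog n}{\log n}\big)^{-1}$, with the last factor maximized at $n_3$. The remaining $\sum 1/(k\log k)$ over $(x,n)$ is at most about $\frac{\llog n}{\log n}$. Together these give an error of at most $\frac1{c_1}\big(\tfrac12 + \frac{c_2}{\llog n_3} + \frac{c_4}{(\llog n_3)^2}\big)\big(1-\frac{\llog n_3}{\log n_3}\big)^{-1}\frac{(\llog n)^3}{(\log n)^2}$, which is the first part of $c_6$.

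The main obstacle is this error sum. One must justify bounding $\sum_{x<k<n}\frac{(\llog k)^2}{k(\log k)^2}$ by $\frac{(\llog n)^3}{(\log n)^2}$ times exactly the factors appearing in $c_6$. I would first try the cleanest route, $\frac{(\llog k)^2}{\log k} \leq \frac{(\llog n)^2}{\log n - \llog n}$, and then bound $\sum 1/(k\log k)$ by $\llog n/\log n$ using the exact integral $-\log(1-y)$ with $y \leq \llog n/\log n$. If the slack from $-\log(1-y) > y$ does not fit, a coarser bracket will be needed. Adding the two estimates gives \eqref{equ:tau_n-lower-bound}.
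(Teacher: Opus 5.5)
Your setup matches the paper's: the same cutoff $x=\omega(n)/c_1$, the same reduction $\tau_n\ge\xi_{x,n}-\tfrac12\xi_{x,n}^2$, the same bound on $\xi_{x,n}^2$ giving the second part of $c_6$, and the same termwise use of \eqref{equ:ell_n-upper-bound}. However, the step you flag as the main obstacle is a genuine gap, and the route you propose for it fails.

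Write $S:=\sum_{x<k<n}\frac1{k\log k}$, and let $Y$ be your uniform bound on $\frac{\frac12(\llog k)^2+c_2\llog k+c_4}{c_1\log k}$ for $x<k<n$. You arrive at $\xi_{x,n}\ge S-YS$. You then bound the first $S$ from below and the second $S$ from above by $\llog n/\log n$. That upper bound is false in general. For $c_1=1$ (the case used in the second round), $S$ equals $\llog n-\llog\omega(n)$ up to $1/n$, and this exceeds $\llog n/\log n$ by an amount of order $(\llog n/\log n)^2$. For example, at $n=10^7$ one has $S\approx0.176$ while $\llog n/\log n\approx0.172$. The upper bound actually available from Lemma~\ref{lem:omega-sum} carries an extra $c_3(\llog n/\log n)^2$. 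The resulting term $Yc_3(\llog n/\log n)^2$ has no room in the prescribed $c_6$, and a ``coarser bracket'' would prove a different lemma.

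The missing idea is that no upper bound on $S$ is needed. Keep the factored form $\xi_{x,n}\ge(1-Y)S$. When $Y<1$ this is increasing in $S$, so the lower bound $S\ge A:=\frac{\llog n-c_5}{\log n}$ suffices. Then $(1-Y)A=A-YA\ge A-Y\frac{\llog n}{\log n}$, because $A\le\frac{\llog n}{\log n}$ (as $c_5\ge0$). This is exactly the paper's chain $\xi_{x,n}\ge(1-y)\sum_{x<k<n}\frac1{k\log k}\ge(1-y)\frac{\llog n-\log(1/c_1)}{\log n}$. When $Y\ge1$, the bound on $Y$ below makes the right-hand side of \eqref{equ:tau_n-lower-bound} negative, so the claim holds trivially since $\tau_n\ge0$.

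There is also a smaller slip. You cannot bound the bracket $\frac12+\frac{c_2}{\llog k}+\frac{c_4}{(\llog k)^2}$ by its value at $n_3$ termwise: for $n$ near $n_3$, the range $k>x$ starts near $\omega(n_3)\approx n_2<n_3$. Instead, bound the whole increasing numerator at $k=n$:
$\frac12(\llog k)^2+c_2\llog k+c_4\le(\llog n)^2\big(\frac12+\frac{c_2}{\llog n}+\frac{c_4}{(\llog n)^2}\big)$.
Only then use $n\ge n_3$. Combined with $\log k\ge\log\omega(n)\ge\log n-\llog n$, this bounds $Y$ by exactly the first part of $c_6$ times $(\llog n)^2/\log n$.
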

\begin{proof}
	Continue from the end of the proof of Lemma~\ref{lem:tau_n-upper-bound}.
	Make the following change of notation: $n_1, n_2$ of Lemma~\ref{lem:tau_n-upper-bound} are now $n_2, n_3$, respectively.
	(Note that this is consistent with the condition $n_2 \geq 10771$, the formula $n_3 := \lceil n_2 \log n_2 \rceil$, and the change of notation in $c_3$.)

	Let $n \geq n_3$ be an integer.
	Recall that $x := \omega(n) / c_1$ and define
	\begin{equation}\label{lem:tau_n-upper-bound:equ:8}
		y := \frac{\tfrac1{2}(\llog x)^2 + c_2 \llog x + c_4}{c_1 \log x} .
	\end{equation}
	If $k$ is an integer such that $x < k < n$, then from \eqref{lem:tau_n-upper-bound:equ:3}, \eqref{equ:ell_n-upper-bound}, and Lemma~\ref{lem:log-t-to-a-over-t} (since $n_2 > \neper^{\neper^2}$) it follows that
	\begin{align}\label{lem:tau_n-bounds:equ:9}
		\frac1{\ell_k}
		&> \left(1 + \frac{\tfrac1{2}(\llog k)^2 + c_3 \llog k + c_4}{c_1\log k}\right)^{-1} \frac1{k \log k} \\
		&\geq (1 + y)^{-1} \frac1{k \log k}
		\geq (1 - y) \frac1{k \log k} . \nonumber
	\end{align}
	Furthermore, from \eqref{lem:tau_n-upper-bound:equ:8}, the inequalities $\omega(n) \leq x < n$, and Eq.\ \eqref{lem:omega-properties:equ:4} of Lemma~\ref{lem:omega-properties} it follows that
	\begin{align}\label{lem:tau_n-bounds:equ:10}
		y &\leq \frac{\tfrac1{2}(\llog n)^2 + c_2 \llog n + c_4}{c_1 \log \omega(n)}
		\leq \frac{\tfrac1{2}(\llog n)^2 + c_2 \llog n + c_4}{c_1 (\log n - \llog n)} \\
		&\leq \frac{(\llog n)^2}{c_1 \log n}\!
		\left(\frac1{2} + \frac{c_2}{\llog n} + \frac{c_4}{(\llog n)^2}\right)\!\left(1 - \frac{\llog n}{\log n}\right)^{-1} . \nonumber
	\end{align}
	In turn, from \eqref{equ:xi-definition}, \eqref{lem:tau_n-bounds:equ:9}, \eqref{lem:tau_n-upper-bound:equ:6-and-a-half}, and Lemma~\ref{lem:omega-sum} (since $c_1 \in [0.99, 1]$ and $n_3 > 10^5$ by \eqref{lem:tau_n-upper-bound:equ:6-and-a-half}) it follows that
	\begin{equation}\label{lem:tau_n-bounds:equ:11}
		\xi_{x, n}
		\geq (1 - y) \sum_{x \,<\, k \,<\, n} \frac1{k \log k}
		\geq (1 - y)\frac{\llog n - \log(1/c_1)}{\log n}
        \geq \frac{\llog n - \log(1/c_1)}{\log n} - \frac{y \llog n}{\log n} .
	\end{equation}
	Furthermore, from~\eqref{lem:tau_n-upper-bound:equ:7} it follows that
	\begin{equation}\label{lem:tau_n-bounds:equ:12}
		\xi_{x, n}^2
		\leq \frac1{c_1^2}\!\left(\frac{\llog n}{\log n} + c_3\!\left(\frac{\llog n}{\log n}\right)^{\!2} \right)^{\!2}
		= \frac{(\llog n)^3}{c_1^2 (\log n)^2} \left(\frac1{\llog n} + \frac{2c_3}{\log n} + \frac{c_3^2 \llog n}{(\log n)^2}\right) .
	\end{equation}
	Finally, putting together \eqref{lem:tau_n-upper-bound:equ:5}, \eqref{lem:tau_n-bounds:equ:11}, \eqref{lem:tau_n-bounds:equ:12}, and \eqref{lem:tau_n-bounds:equ:10} yields \eqref{equ:tau_n-lower-bound}, as desired.
\end{proof}


\section{Upper and lower bounds on \texorpdfstring{$\ell_n$}{ell\_n}}\label{sec:ell_n-bounds}

The following lemma gives an upper bound on $\ell_n$.

\begin{lemma}\label{lem:ell_n-upper-bound}
	Let $c_1, c_2, c_3 > 0$, let $n_2 > \neper^\neper$ be an integer, and define
	\begin{align}
		r_1 &:= (1 - 1/\ell_{n_2})^{-1} , \nonumber\\
		r_2 &:= \left(1 - \frac1{c_1} \!\left(\frac{\llog n_2 + c_2}{\log n_2} + c_3 \!\left(\frac{\llog n_2}{\log n_2}\right)^{\!2} \right)\right)^{-1} , \nonumber\\
		c_4 &:= c_1(\varrho_{n_2} + \gamma + 1)
		-\tfrac1{2}\big(\!\llog (n_2-1)\big)^2 - c_2\llog (n_2-1)
		+\frac{r_1 r_2}{(n_2 - 1)\log n_2} \label{equ:constant-c_4} \\
		&\hspace{1em}+\int_{\log(n_2-1)}^{+\infty} \Bigg(c_3\!\left(\frac{\log s}{s}\right)^{\!2} + \frac{r_2}{c_1}\!\left(\frac{\log s + c_2}{s} + c_3\!\left(\frac{\log s}{s}\right)^{\!2}\right)^{\!\!2}\Bigg) \mathrm{d}s \nonumber
	\end{align}
	Suppose that \eqref{equ:ell_n-first-lower-bound} and \eqref{equ:tau_n-upper-bound} hold for all integers $n \geq n_2$.
	Then \eqref{equ:ell_n-upper-bound} holds for all integers $n \geq n_2$.
\end{lemma}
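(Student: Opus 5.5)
We want $\ell_n < n(\log n + (\tfrac12(\llog n)^2 + c_2\llog n + c_4)/c_1)$. The plan is to start from Lemma~\ref{lem:l_mn-formula} with $m=n$, which gives $\ell_n = n\rho_n(1-\tau_n) \le n\rho_n$. So it suffices to bound $\rho_n$ from above. By \eqref{lem:varrho_n-basics:equ:1} we have $\rho_n \le \log n + \varrho_n + \gamma + 1$, and the task reduces to showing
\[
c_1\varrho_n \le \tfrac12(\llog n)^2 + c_2\llog n + c_4 - c_1(\gamma+1).
\]
I would obtain this by splitting $\varrho_n = \varrho_{n_2} + \sum_{k=n_2}^{n-1}(\varrho_{k+1}-\varrho_k)$ and using \eqref{lem:varrho_n-basics:equ:2}, which gives
\[
\varrho_{k+1}-\varrho_k = \frac1k\!\left(\frac{1}{(1-1/\ell_k)(1-\tau_k)}-1\right).
\]

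To estimate each increment, write $\frac{1}{(1-1/\ell_k)(1-\tau_k)} - 1 = \frac{1/\ell_k}{(1-1/\ell_k)(1-\tau_k)} + \frac{\tau_k}{1-\tau_k}$. For $k\ge n_2$, the first term is at most $r_1 r_2/\ell_k$, since $\ell_k \ge \ell_{n_2}$ and $\tau_k$ is bounded by \eqref{equ:tau_n-upper-bound}. The monotonicity of the right-hand side of \eqref{equ:tau_n-upper-bound} (Lemma~\ref{lem:log-t-to-a-over-t}, with $n_2>\neper^\neper$) gives $1/(1-\tau_k)\le r_2$. Then \eqref{equ:ell_n-first-lower-bound} gives $1/\ell_k \le 1/(c_1 k\log k)$; the sum of $1/(k^2\log k)$ over $k\ge n_2$ is at most $1/((n_2-1)\log n_2)$, and the remaining factor of $c_1$ is absorbed when we multiply through by $c_1$. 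For the second term, use $\tau/(1-\tau) = \tau + \tau^2/(1-\tau) \le \tau_k + r_2\tau_k^2$ and insert \eqref{equ:tau_n-upper-bound} as $\tau_k \le T(\log k)/c_1$, where
\[
T(s) := \frac{\log s + c_2}{s} + c_3\!\left(\frac{\log s}{s}\right)^{\!2}.
\]

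This yields
\[
c_1\varrho_n \le c_1\varrho_{n_2} + \frac{r_1 r_2}{(n_2-1)\log n_2} + \sum_{n_2\le k<n}\frac{1}{k}\Big(T(\log k) + \tfrac{r_2}{c_1}T(\log k)^2\Big).
\]
Compare the sum with an integral: the summand is decreasing in $k$ for $k>\neper^\neper$, so $\sum_{n_2\le k<n} \le \int_{n_2-1}^{n} \mathrm{d}t/t$ of the same function. Substituting $s=\log t$, the main part $\int (\log s + c_2)/s\,\mathrm{d}s$ evaluates exactly to $[\tfrac12(\log s)^2 + c_2\log s]$ from $\log(n_2-1)$ to $\log n$. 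This gives $\tfrac12(\llog n)^2 + c_2\llog n - \tfrac12(\llog(n_2-1))^2 - c_2\llog(n_2-1)$. The remaining integrand terms are nonnegative, so we extend their integral to $+\infty$. Collecting terms reproduces exactly $c_4$ from \eqref{equ:constant-c_4}, minus $c_1(\gamma+1)$, which we add back. Dividing by $c_1$ concludes the argument, and the inequality is strict because the $\Theta(0.542/n)$ term and $\tau_n>0$ give slack.

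The main obstacle I expect is the bookkeeping, not any single idea. Two points need care. First, check that the summand is genuinely monotone, so that the left-endpoint integral comparison with $n_2-1$ is valid. Second, justify $r_2>0$, i.e.\ $\tau_k<1$ uniformly; this should follow from $\tau_k\le$ the bound at $n_2$, with the bound itself less than $1$ implicitly assumed through $r_2$ being well defined. I also need to make sure the factor of $c_1$ multiplying $\varrho_{n_2}$ versus the $1/c_1$ inside $\tau$ match up exactly as in the definition of $c_4$.
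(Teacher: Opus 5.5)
Your proposal is correct and is essentially the paper's proof. Like you, the paper bounds each increment of $\varrho_k$ by $\frac1k\big(\tau_k + r_2\tau_k^2 + r_1 r_2/\ell_k\big)$; your exact splitting of $\frac{1}{(1-1/\ell_k)(1-\tau_k)} - 1$ is only a repackaging of its chain of inequalities. It then compares the resulting sums with integrals over $[n_2-1, n]$ via $s = \log t$ and collects the constants into $c_4$ exactly as you do. The point you flag, that $r_2 > 0$ requires the right-hand side of \eqref{equ:tau_n-upper-bound} at $n_2$ to be less than $1$, is left as an implicit assumption in the paper as well.
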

\begin{proof}
	If $k \geq n_2$ is an integer, then \eqref{equ:tau_n-upper-bound} and Lemma~\ref{lem:log-t-to-a-over-t} (since $n_2 > \neper^\neper$) imply that
	\begin{equation}\label{lem:ell_n-upper-bound:equ:1}
		\frac1{(1 - 1/\ell_k)(1 - \tau_k)}
		\leq \left(1 + \frac{r_1}{\ell_k}\right) \frac1{1 - \tau_k}
		\leq \frac1{1 - \tau_k} + \frac{r_1 r_2}{\ell_k}
		\leq 1 + \tau_k + r_2 \tau_k^2 + \frac{r_1 r_2}{\ell_k} .
	\end{equation}
	Let $n \geq n_2$ be an integer.
	From Eq.~\eqref{lem:varrho_n-basics:equ:2} of Lemma~\ref{lem:varrho_n-basics} and \eqref{lem:ell_n-upper-bound:equ:1} it follows that
	\begin{equation}\label{lem:ell_n-upper-bound:equ:2}
		\varrho_n
		= \varrho_{n_2} + \sum_{k \,=\, n_2}^{n - 1} \frac1{k}\left(\frac1{(1 - 1/\ell_k)(1 - \tau_k)} - 1\right)
		\leq \varrho_{n_2} + \sum_{k \,=\, n_2}^{n - 1} \frac1{k} \left(\tau_k + r_2 \tau_k^2 + \frac{r_1 r_2}{\ell_k}\right) .
	\end{equation}
	Now the goal is to bound the three parts of the last sum in \eqref{lem:ell_n-upper-bound:equ:2}.
	First, from \eqref{equ:tau_n-upper-bound} and Lemma~\ref{lem:log-t-to-a-over-t} (since $n_2 > \neper^\neper$) it follows that
	\begin{align}\label{lem:ell_n-upper-bound:equ:3}
		\sum_{k \,=\, n_2}^{n - 1} \frac{\tau_k}{k}
		&\leq \sum_{k \,=\, n_2}^{n - 1} \frac1{c_1 k} \!\left(\frac{\llog k + c_2}{\log k} + c_3 \!\left(\frac{\llog k}{\log k}\right)^{\!2} \right) \\
		&< \frac1{c_1}\int_{n_2-1}^n \frac{\llog t + c_2}{t \log t} \,\mathrm{d}t + \frac{c_3}{c_1} \int_{n_2-1}^{+\infty} \left(\frac{\llog t}{\log t}\right)^{\!2} \frac{\mathrm{d}t}{t} \nonumber \\
		&= \frac{\left[\tfrac1{2}(\llog t + c_2)^2\right]_{t \,=\, n_2 - 1}^n}{c_1} + \frac{c_3}{c_1} \int_{\log(n_2-1)}^{+\infty} \left(\frac{\log s}{s}\right)^{\!2} \mathrm{d}s . \nonumber
	\end{align}
	Second, a similar reasoning yields
	\begin{align}\label{lem:ell_n-upper-bound:equ:4}
		\sum_{k \,=\, n_2}^{n - 1} \frac{\tau_k^2}{k}
		&< \frac1{c_1^2}\int_{\log(n_2 - 1)}^{+\infty} \!\left(\frac{\log s + c_2}{s} + c_3\!\left(\frac{\log s}{s}\right)^{\!2}\right)^{\!\!2} \mathrm{d}s.
	\end{align}
	Third and last, from \eqref{equ:ell_n-first-lower-bound} it follows that
	\begin{align}\label{lem:ell_n-upper-bound:equ:5}
		\sum_{k \,=\, n_2}^{n - 1} \frac1{k \ell_k}
		&\leq \sum_{k \,=\, n_2}^{n - 1} \frac1{c_1 k^2 \log k}
		< \frac1{c_1 \!\log n_2}\sum_{k \,=\, n_2}^{+\infty} \frac1{k^2}  \\
		&\leq \frac1{c_1 \!\log n_2} \int_{n_2 - 1}^{+\infty} \frac{\mathrm{d}t}{t^2}
		= \frac1{c_1 (n_2 - 1) \log n_2} . \nonumber
	\end{align}
	Combining Eq.\ \eqref{lem:varrho_n-basics:equ:1} of Lemma~\ref{lem:varrho_n-basics}, \eqref{lem:ell_n-upper-bound:equ:2}, \eqref{lem:ell_n-upper-bound:equ:3}, \eqref{lem:ell_n-upper-bound:equ:4}, and \eqref{lem:ell_n-upper-bound:equ:5} yields that
	\begin{align}\label{lem:ell_n-upper-bound:equ:6}
		\rho_n
		&< \log n + \frac{\tfrac1{2} (\llog n)^2 + c_2 \llog n + c_4}{c_1}
	\end{align}
	Finally, upper bound \eqref{equ:ell_n-upper-bound} follows from \eqref{lem:ell_n-upper-bound:equ:6} and Lemma~\ref{lem:l_mn-formula}.
\end{proof}

\begin{remark}\label{rmk:c_4-integral}
	The integral in \eqref{equ:constant-c_4} has an elementary primitive (which is not written here, since it is quite long) and so its computation poses no difficulties.
\end{remark}

The next lemma provides a lower bound on $\ell_n$.

\begin{lemma}\label{lem:ell_n-second-lower-bound}
	Let $c_1, c_2, c_3, c_5, c_6 > 0$, let $n_3 > \neper^{\neper^2}$ be an integer, and define
	\begin{align}
		r_3 &:= \max\!\Big\{0, -\varrho_{n_3} - \gamma - 1 + \frac{0.542}{n_3} \label{equ:constant-r3} \\
		&+ \tfrac1{2}(\llog n_3)^2 - c_5 \llog(n_3 - 1) + c_6 \int_{\log(n_3 - 1)}^{+\infty} \frac{(\log s)^3}{s^2}\,\mathrm{d}s \Big\} ,  \nonumber\\
		r_4 &:= c_2 + \frac{27\neper^{c_2/3 - 3}}{2} + c_3\!\left(\frac{(\llog n_3)^2}{\log n_3} + \frac1{2}\!\left(\frac{(\llog n_3)^2}{\log n_3}\right)^{\!\!2} \,\right) , \nonumber \\
		c_7 &:= c_5 + \frac1{c_1} , \nonumber \\
		c_8 &:= r_3 + \frac{r_4}{c_1} . \nonumber
	\end{align}
	Suppose that inequalities \eqref{equ:tau_n-upper-bound} and \eqref{equ:tau_n-lower-bound} hold for all integers $n \geq n_3$.
	Then \eqref{equ:ell_n-second-lower-bound} holds for all integers~$n \geq n_3$.
\end{lemma}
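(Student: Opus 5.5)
The plan is to bound $\rho_n$ from below and then transfer the bound to $\ell_n$ through Lemma~\ref{lem:l_mn-formula}, using the upper bound \eqref{equ:tau_n-upper-bound} on $\tau_n$.

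First, Lemma~\ref{lem:l_mn-formula} with $m=n$ gives $\ell_n = n\rho_n(1-\tau_n)$, so it suffices to show
\begin{equation*}
\rho_n(1-\tau_n) \geq \log n + \tfrac12(\llog n)^2 - c_7\llog n - c_8 .
\end{equation*}
Second, I bound $\rho_n$ from below. By \eqref{lem:varrho_n-basics:equ:1} we have $\rho_n \geq \log n + \varrho_n + \gamma + 1 - 0.542/n_3$. By \eqref{lem:varrho_n-basics:equ:3} with $n_0=n_3$, we have $\varrho_n \geq \varrho_{n_3} + \sum_{k=n_3}^{n-1}\tau_k/k$. Inserting the lower bound \eqref{equ:tau_n-lower-bound} gives
\begin{equation*}
\sum_{k=n_3}^{n-1}\frac{\tau_k}{k} \geq \sum_{k=n_3}^{n-1}\frac{\llog k - c_5}{k\log k} - c_6\sum_{k\ge n_3}\frac{(\llog k)^3}{k(\log k)^2}.
\end{equation*}
I compare these sums with integrals using Lemma~\ref{lem:sum-to-integral} or monotonicity, which holds since $n_3>\neper^{\neper^2}$.

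\begin{itemize}
\item The function $(\llog t)/(t\log t)$ is decreasing there, so its sum is at least $\int_{n_3}^{n}$, which gives $\tfrac12(\llog n)^2-\tfrac12(\llog n_3)^2$.
\item The $-c_5$ part is bounded above by $\int_{n_3-1}^{n}$, which gives $-c_5(\llog n - \llog(n_3-1))$.
\item The $c_6$ tail is bounded by $c_6\int_{\log(n_3-1)}^\infty (\log s)^3/s^2\,\mathrm{d}s$ after substituting $s=\log t$.
\end{itemize}

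Collecting the constants gives $\rho_n \geq \log n + \tfrac12(\llog n)^2 - c_5\llog n - r_3$. Taking the maximum with $0$ in the definition of $r_3$ keeps $r_3\ge0$, which is harmless for a lower bound.

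Third, I multiply by $1-\tau_n$ and use \eqref{equ:tau_n-upper-bound}. Write $L=\log n$ and $\lambda=\llog n$. Then
\begin{equation*}
\rho_n\tau_n \leq \Big(L+\tfrac12\lambda^2\Big)\frac1{c_1}\Big(\frac{\lambda+c_2}{L}+c_3\frac{\lambda^2}{L^2}\Big).
\end{equation*}
The terms $-c_5\lambda-r_3$ can be dropped here, since they only help when $\rho_n\tau_n$ is subtracted; this step needs $\tau_n\le 1$, which holds. The main contributions expand as follows.

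\begin{itemize}
\item $L\cdot(\lambda+c_2)/L = \lambda + c_2$, which produces the $1/c_1$ in $c_7$ and the $c_2$ in $r_4$.
\item $L\cdot c_3\lambda^2/L^2 = c_3\lambda^2/L$.
\item $\tfrac12\lambda^2(\lambda+c_2)/L$ is bounded by a constant. Maximising $\tfrac12(\lambda+c_2)\lambda^2 \neper^{-\lambda}$ over $\lambda$ should give exactly $\tfrac{27}{2}\neper^{c_2/3-3}$, since $(u/3)^3\neper^{-u+c_2}$ with $u=\lambda+c_2$ has maximum $\neper^{c_2-3}$ at $u=3$. I expect this term to be the origin of that piece of $r_4$.
\item $\tfrac12\lambda^2 c_3\lambda^2/L^2 = \tfrac12c_3(\lambda^2/L)^2$.
\end{itemize}

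Both $c_3$ terms are bounded at $n=n_3$ by Lemma~\ref{lem:log-t-to-a-over-t}, because $\lambda^2/L$ is decreasing for $n>\neper^{\neper^2}$. This gives $\rho_n\tau_n\le (\lambda + r_4)/c_1$, and so the inequality with $c_7=c_5+1/c_1$ and $c_8=r_3+r_4/c_1$ follows.

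The main obstacle is the bookkeeping in the third step. One must confirm that the cross term is bounded exactly by $\tfrac{27}{2}\neper^{c_2/3-3}$, possibly by a cruder bound such as $\lambda+c_2\le 3\neper^{(\lambda+c_2)/3-1}$. One must also check that every monotonicity claim used in the argument is valid in the stated range of $n$.
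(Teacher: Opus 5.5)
Your proposal follows the paper's proof essentially step for step. You get the same lower bound $\rho_n \ge \log n + \tfrac12(\llog n)^2 - c_5\llog n - r_3$ from \eqref{lem:varrho_n-basics:equ:1}, \eqref{lem:varrho_n-basics:equ:3} and the same three sum--integral comparisons. You then multiply by $1-\tau_n$, expand the product the same way, and use Lemma~\ref{lem:log-t-to-a-over-t} for the two $c_3$ terms. Two points need repair.

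First, let $T$ be the right-hand side of \eqref{equ:tau_n-upper-bound}. Your display $\rho_n\tau_n \le (L+\tfrac12\lambda^2)T$ is not justified as written, since you have no upper bound $\rho_n \le L+\tfrac12\lambda^2$. What you mean, and what works, is the following. Put $B := L+\tfrac12\lambda^2-c_5\lambda-r_3$. Use $\tau_n<1$ to get $\rho_n(1-\tau_n)\ge B - B\tau_n$. Then $B\tau_n \le (L+\tfrac12\lambda^2)\tau_n < (L+\tfrac12\lambda^2)T$, using $\tau_n\ge 0$ and $c_5\lambda+r_3\ge 0$; this is where $r_3\ge 0$ is actually used. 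This ordering is slightly cleaner than the first line of \eqref{lem:ell_n-second-lower-bound:equ:5} in the paper, which tacitly needs $B\ge 0$ or $T\le 1$.

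Second, your primary justification of the constant $\tfrac{27}{2}\neper^{c_2/3-3}$ is wrong.
\begin{itemize}
\item The maximum of $\tfrac12(\lambda+c_2)\lambda^2\neper^{-\lambda}$ over $\lambda\ge0$ is strictly smaller than $\tfrac{27}{2}\neper^{c_2/3-3}$ when $c_2>0$. So that value is an upper bound, not the exact maximum.
\item Your substitution $u=\lambda+c_2$ with $\lambda^2(\lambda+c_2)\le u^3$ only gives $\tfrac{27}{2}\neper^{c_2-3}$. This is larger by the factor $\neper^{2c_2/3}$ and would not prove the lemma with the stated $r_4$.
\end{itemize}
Your fallback does work. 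The inequality $\lambda+c_2\le 3\neper^{(\lambda+c_2)/3-1}$ gives
\[
\lambda^2(\lambda+c_2)\neper^{-\lambda} \le 3\neper^{c_2/3-1}\,\lambda^2\neper^{-2\lambda/3} \le 3\neper^{c_2/3-1}\cdot 9\neper^{-2} = 27\neper^{c_2/3-3},
\]
because $\lambda^2\neper^{-2\lambda/3}$ is maximised at $\lambda=3$. The paper reaches the same constant differently: it uses AM--GM, $\lambda^2(\lambda+c_2)\le(\lambda+c_2/3)^3$, and then $\max_{u\ge0}u^3\neper^{-u}=27\neper^{-3}$ after shifting by $c_2/3$. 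With either argument in place, your proof is complete.
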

\begin{proof}
	Let $n \geq n_3$ be an integer.
	From \eqref{equ:tau_n-lower-bound} and inequality \eqref{lem:varrho_n-basics:equ:3} of Lemma~\ref{lem:varrho_n-basics} it follows that
	\begin{equation}\label{lem:ell_n-second-lower-bound:equ:1}
		\varrho_n
		\geq \varrho_{n_3} + \sum_{k \,=\, n_3}^{n - 1} \frac1{k}\!\left(\frac{\llog k - c_5}{\log k} - \frac{c_6(\llog k)^3}{(\log k)^2} \right) .
	\end{equation}
	On the one hand, Lemma~\ref{lem:log-t-to-a-over-t} (since $n_3 > \neper^\neper$) implies that
	\begin{equation}\label{lem:ell_n-second-lower-bound:equ:2}
		\sum_{k \,=\, n_3}^{n - 1} \frac{\llog k}{k\log k}
		\geq \int_{n_3}^n \frac{\llog t}{t \log t}\,\mathrm{d}t
		= \left[\tfrac1{2} (\llog t)^2\right]_{t \,=\, n_3}^n .
	\end{equation}
	On the other hand, Lemma~\ref{lem:log-t-to-a-over-t} (since $n_3 > \neper^{\neper^{3/2}}$) implies that
	\begin{align}\label{lem:ell_n-second-lower-bound:equ:3}
		\sum_{k \,=\, n_3}^{n - 1} & \frac1{k}\!\left(\frac{c_5}{\log k} + \frac{c_6(\llog k)^3}{(\log k)^2} \right)
		< c_5 \int_{n_3 - 1}^n \frac{\mathrm{d}t}{t \log t} + c_6 \int_{n_3 - 1}^{+\infty} \frac{(\llog t)^3}{t (\log t)^2} \,\mathrm{d}t  \\
		&= c_5 \left[\llog t\right]_{t \,=\, n_3 - 1}^n + c_6 \int_{\log(n_3 - 1)}^{+\infty} \frac{(\log s)^3}{s^2}\,\mathrm{d}s \nonumber
	\end{align}
	Combining \eqref{lem:ell_n-second-lower-bound:equ:1}, \eqref{lem:ell_n-second-lower-bound:equ:2}, \eqref{lem:ell_n-second-lower-bound:equ:3}, and Eq.~\eqref{lem:varrho_n-basics:equ:1} of Lemma~\ref{lem:varrho_n-basics} yields
	\begin{equation}\label{lem:ell_n-second-lower-bound:equ:4}
		\rho_n \geq \log n + \tfrac1{2}(\llog n)^2 - c_5 \llog n - r_3 .
	\end{equation}
	From Lemma~\ref{lem:l_mn-formula}, \eqref{lem:ell_n-second-lower-bound:equ:4}, and \eqref{equ:tau_n-upper-bound} it follows that
	\begin{align}\label{lem:ell_n-second-lower-bound:equ:5}
		\frac{\ell_n}{n}
		&\geq \big(\!\log n + \tfrac1{2}(\llog n)^2 - c_5 \llog n - r_3\big) \left(1 - \frac1{c_1} \!\left(\frac{\llog n + c_2}{\log n} + c_3 \!\left(\frac{\llog n}{\log n}\right)^{\!2} \right)\right) \\
		&> \log n + \tfrac1{2}(\llog n)^2 - c_7 \llog n \nonumber \\
		&\phantom{3em}- \left(r_3 + \frac1{c_1}\!\left(c_2 + \frac{(\llog n)^2 (\llog n + c_2)}{2 \log n} + \frac{c_3(\llog n)^2}{\log n} + \frac{c_3}{2}\!\left(\frac{(\llog n)^2}{\log n}\right)^{\!\!2}\right)\right) . \nonumber
	\end{align}
	If $t \geq 1$ is a real number, then the inequality of arithmetic and geometric means and Lemma~\ref{lem:log-t-to-a-over-t} imply that
	\begin{equation}\label{lem:ell_n-second-lower-bound:equ:6}
		\frac{(\log t)^2 (\log t + c_2)}{t}
		\leq \frac{(\log t + c_2/3)^3}{t}
		\leq \neper^{c_2/3} \max_{s \,\geq\, 1} \frac{(\log s)^3}{s}
		= 27 \neper^{c_2/3 - 3} .
	\end{equation}
	Finally, from \eqref{lem:ell_n-second-lower-bound:equ:5}, \eqref{lem:ell_n-second-lower-bound:equ:6}, and Lemma~\ref{lem:log-t-to-a-over-t} (since $n_3 > \neper^{\neper^2}$) inequality \eqref{equ:ell_n-second-lower-bound} follows.
\end{proof}

\begin{remark}
	Similarly to Remark~\ref{rmk:c_4-integral}, the integral in \eqref{equ:constant-r3} has an elementary primitive (which is not written here, since it is quite long) and so its computation poses no difficulties.
\end{remark}

\section{Numerical computations}\label{sec:numerical-computations}

This section concerns the computation of the explicit constants of Theorems~\ref{thm:lower} and~\ref{thm:lower}.
The corresponding source code is available in a public repository~\cite{Repo} and works in two phases.
First, the highly efficient \textsf{C} code written by Barco~\cite{Barco} and based on the work of Bille, Christiansen, Prezza, and Skjoldjensen~\cite{MR3713292} computes a table of lucky numbers $\ell_n$ up to $n = \ConMaxLuckyIndex$.
Second, a \textsf{SageMath}~\cite{SageMath} script uses the previous table and interval arithmetic to compute the explicit constants with certified precision.
Hereafter, a question mark at the end of a decimal expansion means that the preceding digit may have an error of $\pm 1$.

\subsection{First lower bound}

Put $n_0 := \ConFlbNzer$.
Let $c_1$ and $n_1$ be defined as in Lemma~\ref{lem:ell_n-first-lower-bound}.
Then a computation shows that
\begin{equation*}
	c_1 = \ConRoneCone, \quad
	n_1 = \ConFlbNone ,
\end{equation*}
and Lemma~\ref{lem:ell_n-first-lower-bound} implies that \eqref{equ:ell_n-first-lower-bound} holds for all integers $n \geq n_1$.

\subsection{First round}\label{subsec:first-round}

Redefine $n_1 := \ConRoneNone$ (this is the maximum integer such that later \mbox{$n_3 < \ConMaxLuckyIndex$}).
Of course, inequality \eqref{equ:ell_n-first-lower-bound} still holds for all integers $n \geq n_1$.

Let $n_2$, $c_2$, $c_3$ be defined as in Lemma~\ref{lem:tau_n-upper-bound}.
Then
\begin{equation*}
	n_2 = \ConRoneNtwo , \quad
	c_2 = \ConRoneCtwo , \quad
	c_3 = \ConRoneCthr , \quad
\end{equation*}
and Lemma~\ref{lem:tau_n-upper-bound} implies that \eqref{equ:tau_n-upper-bound} holds for all integers $n \geq n_2$.

Let $c_4$ be defined as in Lemma~\ref{lem:ell_n-upper-bound}.
Then
\begin{equation*}
	c_4 = \ConRoneCfou
\end{equation*}
and Lemma~\ref{lem:ell_n-upper-bound} implies that \eqref{equ:ell_n-upper-bound} holds for all integers $n \geq n_2$.

Let $n_3$, $c_5$, and $c_6$ be defined as in Lemma~\ref{lem:tau_n-lower-bound}.
Then
\begin{equation*}
	n_3 = \ConRoneNthr , \quad
	c_5 = \ConRoneCfiv , \quad
	c_6 = \ConRoneCsix , \quad
\end{equation*}
and Lemma~\ref{lem:tau_n-lower-bound} implies that \eqref{equ:tau_n-lower-bound} holds for all integers $n \geq n_3$.

Finally, let $c_7$ and $c_8$ be defined as in Lemma~\ref{lem:ell_n-second-lower-bound}.
Then
\begin{equation*}
	c_7 = \ConRoneCsev , \quad
	c_8 = \ConRoneCeig , \quad
\end{equation*}
and Lemma~\ref{lem:ell_n-second-lower-bound} implies that \eqref{equ:ell_n-second-lower-bound} holds for all integers $n \geq n_3$.

\subsection{Bootstrapping}

Define
\begin{equation*}
	n_4 := \Big\lceil \!\exp\!\Big(\!\exp\!\Big(c_7 + (c_7^2 + 2c_8)^{1/2}\Big)\Big) \Big\rceil .
\end{equation*}
Note that
\begin{equation}\label{equ:bootstrapping}
    \tfrac1{2}(\llog n)^2 - c_7 \llog n - c_8 \geq 0
\end{equation}
for all integers $n \geq n_4$.
Since $n_3 < n_4 < 10^{100}$, from \eqref{equ:bootstrapping} and \eqref{equ:ell_n-second-lower-bound} it follows that \eqref{thm:lower:equ:1} holds for all integers $n \geq 10^{100}$.

Now the goal is to prove that \eqref{thm:lower:equ:1} is true for all positive integers $n < 10^{100}$.
Employ the notation of Lemma~\ref{lem:ell_n-finite-range}.
A simple analysis shows that taking
\begin{equation}\label{lem:ell_n-first-numerical-lower-bound:equ:2}
	t = \frac{W(\neper^{1-c_0}) + c_0 - 1}{c_0} ,
\end{equation}
maximizes $m_2$.
Applying Lemma~\ref{lem:ell_n-finite-range} with $n_0 = 66$, resp.\ $n_0 = 124000$, and $t$ given by \eqref{lem:ell_n-first-numerical-lower-bound:equ:2} yields that \eqref{thm:lower:equ:1} is true for $n \in [1269, 31807212]$, resp.\ $n \in [28824381, 1.2... \!\cdot\! 10^{100}]$.
Furthermore, a quick computation shows that \eqref{thm:lower:equ:1} is true for all positive integers $n \leq 1269$.
This proves that \eqref{thm:lower:equ:1} holds for all positive integers $n < 10^{100}$.
Therefore, inequality \eqref{thm:lower:equ:1} is true for all integers $n \geq 1$, as desired.

\subsection{Second round}

Since \eqref{thm:lower:equ:1} holds for all integers $n \geq 1$, it is possible to repeat the computations of Subsection~\ref{subsec:first-round} using $c_1^\prime := 1$ and $n_1^\prime := \ConRtwoNone$ instead.
It follows that
\begin{equation*}
	\begin{gathered}
		n_2^\prime = \ConRtwoNtwo , \quad
		c_2^\prime = \ConRtwoCtwo , \quad
		c_3^\prime = \ConRtwoCthr , \quad \\
		c_4^\prime = \ConRtwoCfou \\
		n_3^\prime = \ConRtwoNthr , \quad
		c_5^\prime = \ConRtwoCfiv , \quad
		c_6^\prime = \ConRtwoCsix , \quad \\
		c_7^\prime = \ConRtwoCsev , \quad
		c_8^\prime = \ConRtwoCeig , \quad
	\end{gathered}
\end{equation*}
and that \eqref{thm:lower:equ:2} holds for all integers $n \geq n_3^\prime$.
This proves \eqref{thm:lower:equ:2} and completes the proof of Theorem~\ref{thm:lower}.

\subsection{Third half-round}

Let $c_1^{\prime\prime} := 1$ and $n_1^{\prime\prime} := \ConRthrNone$.
Repeating the computations of Subsection~\ref{subsec:first-round} that led to the upper bound of $\ell_n$ gives
\begin{equation*}
    \begin{gathered}
        n_2^{\prime\prime} = \ConRthrNtwo , \quad
        c_2^{\prime\prime} = \ConRthrCtwo , \quad
        c_3^{\prime\prime} = \ConRthrCthr , \quad \\
        c_4^{\prime\prime} = \ConRthrCfou ,
    \end{gathered}
\end{equation*}
and that \eqref{thm:upper:equ:2} holds for all integers $n > \ConMaxLuckyIndex$.
Finally, a direct computation shows that \eqref{thm:upper:equ:1} holds for all integers $n \in [4, \ConMaxLuckyIndex]$.
This proves Theorem~\ref{thm:upper}.

\providecommand{\bysame}{\leavevmode\hbox to3em{\hrulefill}\thinspace}
\providecommand{\MR}{\relax\ifhmode\unskip\space\fi MR }
\providecommand{\MRhref}[2]{%
	\href{http://www.ams.org/mathscinet-getitem?mr=#1}{#2}
}
\providecommand{\href}[2]{#2}

\end{document}